\newtheorem{lem}{Lemma}[section]
\newtheorem{thm}{Theorem}[section]
\theoremstyle{definition}
\theoremstyle{remark}
\theoremstyle{remark}
\newtheorem{remark}{Remark}[section]
\numberwithin{equation}{section}
\newcommand{\N}{{\mathbb N}}
\newcommand{\R}{{\mathbb R}}
\definecolor{blu}{rgb}{0,0,1}
\title[Sharp non-existence results]{Sharp non-existence results of prescribed $L^2$-norm solutions for some class of Schr\"odinger-Poisson and quasilinear equations}
\author[Louis Jeanjean]{Louis Jeanjean}%$^*$}
\address{Louis Jeanjean
\newline\indent
Laboratoire de Math\'ematiques (UMR 6623)
\newline\indent
Universit\'{e} de Franche-Comt\'{e}
\newline\indent
16, Route de Gray 25030 Besan\c{c}on Cedex, France}
\email{louis.jeanjean@univ-fcomte.fr}
\author[Tingjian Luo]{Tingjian Luo}%$^*$}
\address{Tingjian Luo
\newline\indent
Laboratoire de Math\'ematiques (UMR 6623)
\newline\indent
Universit\'{e} de Franche-Comt\'{e}
\newline\indent
16, Route de Gray 25030 Besan\c{c}on Cedex, France}
\email{tingjianluo@gmail.com}
\begin{document}
\subjclass[2000]{35J50, 35Q41, 35Q55, 37K45}

\keywords{Sharp non-existence, $L^2$-norm constraint, Schr\"odinger-Poisson equations, Quasilinear equations}

\begin{abstract}
In this paper we study the existence of minimizers for
$$  \quad  F(u) = \frac{1}{2}\int_{\R^3} |\nabla u|^2 dx + \frac{1}{4}\int_{\R^3}\int_{\R^3}\frac{\left | u(x) \right |^2\left | u(y) \right |^2}{\left | x-y \right |}dxdy-\frac{1}{p}\int_{\R^3}\left | u \right |^p dx$$
on the constraint
$$S(c) = \{u \in H^1(\R^3) : \int_{\R^3}|u|^2 dx = c \},$$
where  $c>0$ is a given parameter. In the range $p \in [3, \frac{10}{3}]$ we explicit a threshold value of $c>0$ separating existence and non-existence of minimizers. We also derive a non-existence result of critical points of $F(u)$ restricted to $S(c)$ when $c>0$ is sufficiently small. Finally, as a byproduct of our approaches, we extend some results of \cite{CJS} where a constrained minimization problem, associated to a quasilinear equation, is considered.
\end{abstract}
\maketitle

\section{Introduction}
The following stationary nonlinear
Schr\"odinger-Poisson equation
 \begin{equation}\label{eq1.1}
-\Delta u - \lambda u + ( |x|^{-1}\ast |u|^2) u - |u|^{p-2}u = 0 \ \ \mbox{in}\ \ \R^3,
 \end{equation}
where $p \in (2,6)$ and  $\lambda \in \R$ has attracted considerable attention in the recent period. Part of the interest is due to the fact that to a pair $(u(x), \lambda)$ solution of (\ref{eq1.1}) corresponds a standing wave $\phi(x)=e^{-i \lambda t}u(x)$ of the evolution equation
\begin{equation}\label{eq1.2}
i \partial_t \phi + \Delta \phi - ( |x|^{-1}\ast |\phi|^2) \phi + |\phi|^{p-2}\phi =0 \ \ \mbox{in}\ \ \mathbb{R^+}\times \R^3.
\end{equation}
This class of Schr\"odinger type equations with a repulsive nonlocal Coulombic potential is obtained by approximation of the  Hartree-Fock equation describing a quantum mechanical system of many particles, see for instance \cite{BA, L2, LS, MA}. For physical reasons solutions are searched in $H^1(\R^3)$.

A first line of study to (\ref{eq1.1}) is to consider $\lambda \in \R$ as a fixed parameter and then to search for a $u \in H^1(\R^3)$ solving (\ref{eq1.1}). In that direction, mainly by variational methods, the existence, non-existence and multiplicity of solutions have been extensively studied by many authors. See, for example, \cite{AP, AR, DM, HK2, HK, R, R2, WZ, ZZ} and the references therein.

In the present paper, motivated by the fact that physicists are often interested in ``normalized solutions", we look for solutions in $H^1(\R^3)$ having a prescribed $L^2-$norm. More precisely, for given $c>0$ we look to
$$(u_c,\lambda_c)\in H^1(\R^3)\times \R \ \mbox{ solution of }\ \eqref{eq1.1} \mbox{ with } \|u_c\|_{L^2(\R^3)}^2=c.$$
In this case, a solution $u_c\in H^1(\R^3)$ of  \eqref{eq1.1} can be obtained as a constrained critical point of the functional
$$F(u):=\frac{1}{2}\left \| \triangledown u \right \|_{L^{2}(\R^3)}^2+\frac{1}{4}\int_{\R^3}\int_{\R^3}\frac{\left | u(x) \right |^2\left | u(y) \right |^2}{\left | x-y \right |}dxdy-\frac{1}{p}\int_{\R^3}\left | u \right |^pdx $$
\noindent
on the constraint  $$S(c):= \{u \in H^1(\R^3) :\   \left \| u \right \|_{L^{2}(\R^3)}^2=c,\  c>0 \}.$$
The parameter $\lambda_c \in \R$, in this approach,  can't be fixed any longer and it will appear as a Lagrange parameter.

It is well known, see for example  \cite{R}, that for any $p \in (2,6)$, $F(u)$ is a well defined and $C^1$-functional. We set
$$m(c):=\inf_{u\in S(c)} F(u).$$

It is standard that minimizers of $m(c)$ are exactly critical points of $F(u)$ restricted to $S(c)$, and thus solutions of \eqref{eq1.1}. Also it can be checked in many cases that the set of minimizers is orbitally stable under the flow of (\ref{eq1.2}). Thus the search of minimizers can provide us some information on the dynamics of (\ref{eq1.2}). \medskip

By scaling arguments, see Remark \ref{rem1}, it is readily seen that for any $c \in (0, \infty)$, $m(c)\in (-\infty, 0]$ if $p\in (2,\frac{10}{3})$ and $m(c)=-\infty$ if $p\in (\frac{10}{3},6)$. When $m(c)> -\infty$, the existence of minimizers of $m(c)$ has been studied in \cite{BS1}  \cite{BS} \cite{SS}, see also \cite{HK2} for a closely related problem. In \cite{SS}, the authors prove the existence of minimizers  when $p=\frac{8}{3}$ and $c \in (0, c_0)$ for a suitable $c_0>0$. It is shown in \cite{BS} that a minimizer exists if $p\in (2,3)$ and $c>0$ is small enough, and in \cite{BS1} that when $p\in (3,\frac{10}{3})$, $m(c)$ admits a minimizer for any $c>0$ sufficiently large. In addition, when $p \in (\frac{10}{3}, 6)$,  though $m(c)=-\infty$ for all $c>0$, \cite{BJL} shows that there exists, for $c>0$ small enough, a critical point of $F(u)$ constrained on  $S(c)$, at a strictly positive energy level. This  critical point is a least energy solution in the sense that it minimizes $F(u)$ on the set of solutions having this $L^2$-norm. It is proved as well in  \cite{BJL} that it is orbitally unstable.\medskip

The first aim of this paper is to establish non-existence results of minimizers and more generally of constrained critical points of $F(u)$ on $S(c)$ in the range $p\in [3, \frac{10}{3}]$. As we shall see our results are sharp in the sense that we explicit a threshold value of $c>0$ separating existence and non-existence of minimizers.

We first present a detailed study of the function $c \to m(c)$ when $p \in [3, \frac{10}{3}]$. This study is, we believe, interesting for itself, but it is also a key to establish the existence or the non-existence of minimizers.  Let
\begin{eqnarray}\label{1.1}
c_1=\inf\{c>0 \ :\ m(c)<0 \}.
\end{eqnarray}
\begin{thm}\label{lm1.1}
$(\mathbb{I})$ When $p \in (3, \frac{10}{3})$ we have
\begin{itemize}
\item[(i)] $c_1 \in (0, \infty)$;
\item[(ii)]  $m(c)=0$, as $ c \in (0,  c_1]$;
\item[(iii)] $m(c)<0$ and is strictly decreasing about $c$, as $c  \in (c_1, \infty)$.
\end{itemize}
$(\mathbb{II})$ When $p=3$ or $p=\frac{10}{3}$ we have
\begin{itemize}
\item[(iv)] When $p=3$, $m(c)=0$ for all $c>0$;
\item[(v)] When $p=\frac{10}{3}$, we denote
\begin{eqnarray}\label{1.1'}
c_2=\inf\{c>0 \ :\ \exists\ u\in S(c) \mbox{ such that } F(u)\leq 0 \},
\end{eqnarray}
then $c_2 \in (0, \infty)$ and
\begin{eqnarray}\label{1.1''}
\left\{\begin{matrix}
m(c)= 0, &\mbox{as}& c \in (0, c_2);\\
m(c)=-\infty, &\mbox{as}& \ c \in (c_2, \infty).
\end{matrix}\right.
\end{eqnarray}
\end{itemize}
\end{thm}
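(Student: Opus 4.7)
The plan is to combine the $L^2$-preserving dilation $u_t(x) := t^{3/2} u(tx)$ with sharp Gagliardo--Nirenberg and Hardy--Littlewood--Sobolev-type interpolation inequalities. A direct computation gives
\begin{equation*}
F(u_t) = \tfrac{t^2}{2}\|\nabla u\|_{L^2}^2 + \tfrac{t}{4}\,D(u) - \tfrac{t^{3p/2-3}}{p}\|u\|_{L^p}^p,
\end{equation*}
where $D(u)$ denotes the Coulomb integral and all three exponents of $t$ are positive on $[3,10/3]$. Sending $t\to 0^+$ yields $F(u_t)\to 0^+$, so $m(c) \leq 0$ for every $c>0$; combined with the standard Gagliardo--Nirenberg bound $\|u\|_{L^p}^p \leq C_{\mathrm{GN}}\,c^{(6-p)/4}\,\|\nabla u\|_{L^2}^{3(p-2)/2}$, this also yields $m(c) > -\infty$ whenever $p < 10/3$. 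These two observations give the ``$\leq 0$'' halves of (ii), (iv), and the $c\in(0,c_2)$ part of (v).

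For part $(\mathbb{I})$ the main step is $c_1 > 0$. Suppose $F(u)<0$ for some $u \in S(c)$ and set $X = \|\nabla u\|_{L^2}^2$, $Y = D(u)$. On one hand, using the standard Gagliardo--Nirenberg bound in $X/2 < \|u\|_{L^p}^p/p$ gives $X \leq K_1\,c^{(6-p)/(10-3p)}$. On the other hand, a scale-invariant three-term interpolation $\|u\|_{L^p}^p \leq C_\ast\,X^{(5p-12)/6}\,Y^{(6-p)/6}$ (obtained by Hölder between $L^2$, $L^6$ controlled via Sobolev by $\|\nabla u\|_{L^2}$, and $L^{12/5}$ whose fourth power is bounded by $D(u)$ via HLS) together with AM--GM on $\tfrac12 X + \tfrac14 Y$ yields a $c$-independent lower bound $X \geq X_0>0$. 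Since $K_1\,c^{(6-p)/(10-3p)} \to 0$ as $c\to 0^+$, these two bounds are incompatible for $c$ small; hence $F \geq 0$ on $S(c)$ and $c_1 > 0$. Finiteness $c_1 < \infty$ is then established by a rescaled test function $u = c^{1/2}\sigma^{3/2}\phi(\sigma\,\cdot)$ with $\|\phi\|_{L^2}=1$ and $\sigma$ optimised so that $F(u) < 0$ for $c$ large. Point (ii) then follows from $m(c) \geq 0$ on $(0,c_1]$ combined with $m(c) \leq 0$. For (iii), first note that $m$ is non-increasing (via a bump-at-infinity construction), so $m(c) < 0$ on $(c_1,\infty)$ by the definition of $c_1$; for strict monotonicity pick $c_1 < a < b$, a near-minimizer $u \in S(a)$ with $F(u)<0$, set $\kappa = b/a$ and $v(x) := \kappa^2 u(\kappa x) \in S(b)$. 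A direct computation gives $F(v) = \kappa^3\bigl(\tfrac X2 + \tfrac Y4\bigr) - \tfrac{\kappa^{2p-3}}{p}\|u\|_{L^p}^p$, and since $p>3$ implies $\kappa^{2p-3}>\kappa^3$, one obtains $F(v) < \kappa^3 F(u) < F(u) < 0$, yielding $m(b) \leq \kappa^3 m(a) < m(a)$.

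Part $(\mathbb{II})$(iv), $p=3$, is the critical scale-invariant case in which the three-term interpolation becomes $\|u\|_{L^3}^3 \leq C_\ast\,\|\nabla u\|_{L^2}\,D(u)^{1/2}$, independent of $\|u\|_{L^2}$. Combining with AM--GM gives $F(u) \geq \bigl(\tfrac{1}{\sqrt 2} - \tfrac{C_\ast}{3}\bigr)\,\|\nabla u\|_{L^2}\,D(u)^{1/2}$, and the crucial analytic input is the sharp constant bound $C_\ast \leq 3/\sqrt 2$, which forces $F \geq 0$ on every $S(c)$ and hence $m(c)=0$ for all $c>0$. For $(\mathbb{II})$(v), $p=10/3$, the scaling exponent equals $2$; the critical Gagliardo--Nirenberg bound $\|u\|_{L^{10/3}}^{10/3} \leq C_{\mathrm{GN}}\,c^{2/3}\,X$ gives $F(u) \geq \tfrac12\bigl(1 - \tfrac{3C_{\mathrm{GN}}}{5}\,c^{2/3}\bigr)X + \tfrac14 Y$, strictly positive whenever $c < (5/(3C_{\mathrm{GN}}))^{3/2}$, proving $c_2>0$ and $m(c)=0$ on $(0,c_2)$. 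Finiteness $c_2<\infty$ follows from an appropriate rescaling of the Gagliardo--Nirenberg optimiser. Finally, for $c>c_2$ pick $u\in S(c)$ with $F(u) \leq 0$; at $p=10/3$ the scaling reads $F(u_t) = t^2\bigl(\tfrac X2 - \tfrac{3}{10}\|u\|_{L^{10/3}}^{10/3}\bigr) + \tfrac t4 Y$, and $F(u) \leq 0$ forces the $t^2$-coefficient to be $\leq -Y/4 < 0$, so $F(u_t)\to -\infty$ as $t\to\infty$ and $m(c) = -\infty$.

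The most delicate step is the sharp-constant bound $C_\ast \leq 3/\sqrt 2$ at $p=3$ and the quantitative exponent comparison needed to extract the $c$-independent lower bound $X \geq X_0$ in the proof of $c_1 > 0$; these are where the analytic content underlying the word \emph{sharp} in the theorem really sits.
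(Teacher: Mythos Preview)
Your overall architecture is sound and several steps match the paper's, but there is a genuine error in the key analytic input. The three-term interpolation $\|u\|_{L^p}^p \le C_\ast\, X^{(5p-12)/6} Y^{(6-p)/6}$ (with $Y$ the Coulomb energy) cannot be derived the way you describe: the Hardy--Littlewood--Sobolev inequality gives $Y \le C\|u\|_{12/5}^4$, not $\|u\|_{12/5}^4 \le C\,Y$. No reverse inequality can hold---take $u$ supported on two widely separated bumps to make $Y$ small while $\|u\|_{12/5}$ stays bounded below. Since this interpolation carries both your proof of $c_1>0$ and your treatment of (iv), the argument as written does not close.

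The inequality you want is nevertheless true; the correct route is the Poisson-equation trick the paper uses. Writing $\Phi = |x|^{-1}\ast|u|^2$ so that $-\Delta\Phi = 4\pi|u|^2$ and $\|\nabla\Phi\|_2^2 = 4\pi Y$, one multiplies by $|u|$ and integrates by parts to get $4\pi\|u\|_3^3 \le \|\nabla\Phi\|_2\|\nabla u\|_2$, i.e.\ $\|u\|_3^3 \le (2\sqrt\pi)^{-1} X^{1/2} Y^{1/2}$; interpolating this with Sobolev for $\|u\|_6$ then yields your inequality for all $p\in[3,6]$ with explicit constants. At $p=3$ this gives $C_\ast = (2\sqrt\pi)^{-1}$, comfortably below your threshold $3/\sqrt2$---but note that without this derivation your claim ``$C_\ast \le 3/\sqrt2$'' was a bare assertion. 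The paper in fact bypasses AM--GM entirely for (iv): applying Young's inequality with a well-chosen parameter in the step above gives directly $\tfrac14 Y \ge \tfrac13\|u\|_3^3 - \tfrac{1}{36\pi}X$, whence $F(u) \ge (\tfrac12 - \tfrac{1}{36\pi})X > 0$.

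Two smaller gaps: in (ii) the definition of $c_1$ gives $m=0$ only on the \emph{open} interval $(0,c_1)$, and you need continuity of $c\mapsto m(c)$ (which the paper invokes) to conclude $m(c_1)=0$; and in (v), for $c>c_2$ the existence of $u\in S(c)$ with $F(u)\le 0$ is not immediate from the infimum definition and requires a short scaling step that the paper carries out explicitly. Finally, once the interpolation is repaired, your direct energy-contradiction route to $c_1>0$ is genuinely different from the paper's, which instead argues indirectly via the Pohozaev-type identity $Q(u)=0$ for constrained critical points together with non-existence of minimizers for small $c$; your route is more self-contained and avoids critical-point machinery.
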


Our result concerning the existence or non-existence of a minimizer is

%\begin{lemma}(\cite{BS1})\label{lm1.2}
%When $3< p <\frac{10}{3}$, for any $c>0$ such that $m(c)<0$, then there exists $u_c \in S(c)$ such that $u_c$ is a minimizer of $m(c)$.
%\end{lemma}

\begin{thm}\label{th1.1}
\begin{itemize}
\item[(i)] When $p \in (3, \frac{10}{3})$, $m(c)$ has a minimizer if and only if $c \in [c_1, \infty)$.
\item[(ii)] When $p=3$ or $p=\frac{10}{3}$, $m(c)$ has no minimizer for any $c>0$.
\end{itemize}
\end{thm}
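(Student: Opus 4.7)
The proof decomposes into the sub-cases $c<c_{1}$, $c=c_{1}$, $c>c_{1}$ for part (i) and $p\in\{3,\tfrac{10}{3}\}$ for part (ii). The common engine is the Pohozaev-type identity coming from the $L^{2}$-preserving dilation $u_{t}(x):=t^{3/2}u(tx)\in S(c)$: any constrained critical point $u$ of $F$ on $S(c)$ satisfies $\frac{d}{dt}F(u_{t})\big|_{t=1}=0$, i.e.
\begin{equation*}
(\mathrm{P}_{0})\qquad \|\nabla u\|_{2}^{2}+\tfrac{1}{4}D(u)-\tfrac{3(p-2)}{2p}\|u\|_{p}^{p}=0,\qquad D(u):=\iint\tfrac{|u(x)|^{2}|u(y)|^{2}}{|x-y|}\,dxdy.
\end{equation*}

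To rule out minimizers when $c<c_{1}$ and $p\in(3,\tfrac{10}{3})$, I argue by contradiction: a minimizer $u$ must satisfy $F(u)=m(c)=0$ and (P$_{0}$). A third relation comes from the shape-preserving dilation $\lambda u\in S(\lambda^{2}c)$; since $c<c_{1}$, the value $\lambda^{2}c$ stays in $(0,c_{1}]$ for $\lambda$ in an open neighbourhood of $1$, so Theorem~\ref{lm1.1} yields $F(\lambda u)\ge m(\lambda^{2}c)=0=F(u)$ there, making $\lambda=1$ an interior minimum and producing
\begin{equation*}
(\mathrm{P}_{1})\qquad \|\nabla u\|_{2}^{2}+D(u)-\|u\|_{p}^{p}=0.
\end{equation*}
Viewed as a homogeneous linear $3\times 3$ system in $(\|\nabla u\|_{2}^{2},D(u),\|u\|_{p}^{p})$, the equations $F(u)=0$, (P$_{0}$) and (P$_{1}$) have coefficient determinant equal to $(p-3)/(2p)$, which is non-zero on $(3,\tfrac{10}{3})$. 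Hence $u\equiv 0$, contradicting $u\in S(c)$ with $c>0$.

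For the existence half of part (i), the range $c>c_{1}$ is treated by Lions' concentration-compactness applied to a minimizing sequence $\{u_{n}\}$: a Gagliardo--Nirenberg estimate (valid since $p<\tfrac{10}{3}$) bounds $\{u_{n}\}$ in $H^{1}$; vanishing is excluded since $F(u_{n})\to m(c)<0$ keeps $\|u_{n}\|_{p}$ away from zero; dichotomy is excluded by the strict sub-additivity $m(c)<m(c')+m(c-c')$ for $c'\in(0,c)$, which follows from the strict monotonicity of $m$ on $(c_{1},\infty)$ via a two-bump transplantation argument. For the borderline case $c=c_{1}$, I pick $c_{n}\downarrow c_{1}$, take minimizers $u_{n}\in S(c_{n})$ from the previous step, and extract, up to translation, a weak $H^{1}$-limit $u$. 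Standard semi-continuity arguments give $F(u)\le 0$, and the non-existence result just proved then forbids $0<\|u\|_{2}^{2}<c_{1}$, so either $u\equiv 0$ or $\|u\|_{2}^{2}=c_{1}$ and $u$ is the desired minimizer. Excluding the vanishing alternative $u\equiv 0$ is, in my opinion, the main obstacle, since a dispersive sequence can drive $\|\nabla u_{n}\|_{2}$, $D(u_{n})$ and $\|u_{n}\|_{p}$ simultaneously to zero; my plan is to couple the Euler--Lagrange equation satisfied by each $u_{n}$ with (P$_{0}$) to derive a uniform positive lower bound on $\|\nabla u_{n}\|_{2}$, thereby forcing concentration.

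Part (ii) is handled by the same algebraic template. When $p=\tfrac{10}{3}$, $F(u)=0$ and (P$_{0}$) together express $\|u\|_{10/3}^{10/3}$ as the same multiple of $\|\nabla u\|_{2}^{2}$ but two different positive multiples of $D(u)$, forcing $D(u)=0$ and hence $u\equiv 0$; for $c>c_{2}$ there is nothing to prove since $m(c)=-\infty$. When $p=3$, the same two identities merely yield the couplings $D(u)=2\|\nabla u\|_{2}^{2}$ and $\|u\|_{3}^{3}=3\|\nabla u\|_{2}^{2}$, so I invoke the Euler--Lagrange equation itself: testing against $u$ gives $\lambda\|u\|_{2}^{2}=\|\nabla u\|_{2}^{2}+D(u)-\|u\|_{3}^{3}=0$, so the Lagrange multiplier $\lambda$ vanishes. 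The classical Pohozaev identity
\begin{equation*}
\tfrac{1}{2}\|\nabla u\|_{2}^{2}+\tfrac{3\lambda}{2}\|u\|_{2}^{2}-\tfrac{5}{4}D(u)+\|u\|_{3}^{3}=0
\end{equation*}
then collapses, with $\lambda=0$ and the above couplings, to $\|\nabla u\|_{2}^{2}=0$, contradicting $u\in S(c)$ with $c>0$.
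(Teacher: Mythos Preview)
Your approach to part (i) for $c<c_{1}$ via the $3\times 3$ determinant is correct and is a genuinely different (and rather elegant) alternative to the paper's route, which instead uses the scaling $u\mapsto t^{2}u(t\,\cdot)$ to show that the existence of a zero-level minimizer at some $\tilde c<c_{1}$ would force $m(c)<0$ for all $c>\tilde c$, contradicting the definition of $c_{1}$. Your argument for $p=\tfrac{10}{3}$ is in substance the paper's identity $F(u)-\tfrac12 Q(u)=\tfrac18 B(u)$; note however that you should not assume $F(u)=0$ but only $F(u)=m(c)\le 0$, so that the case $c=c_{2}$ is covered as well.

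There are two genuine gaps. The more serious one is the case $p=3$. Your Pohozaev identity has the wrong signs on the nonlocal and power terms; the correct identity (with your sign convention for $\lambda$) reads $\tfrac12\|\nabla u\|_{2}^{2}-\tfrac{3\lambda}{2}\|u\|_{2}^{2}+\tfrac{5}{4}D(u)-\|u\|_{3}^{3}=0$. More importantly, even after this correction the argument collapses: your $(\mathrm P_{0})$ is \emph{not} independent of the Nehari identity and the Pohozaev identity, since $Q(u)=\tfrac32 I_{\lambda}(u)-P_{\lambda}(u)$. With $D(u)=2\|\nabla u\|_{2}^{2}$, $\|u\|_{3}^{3}=3\|\nabla u\|_{2}^{2}$ and $\lambda=0$, the correct Pohozaev relation becomes $(\tfrac12+\tfrac52-3)\|\nabla u\|_{2}^{2}=0$, which is vacuous and yields no contradiction. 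The paper's proof for $p=3$ does not proceed by algebraic identities at all; it relies on the analytic estimate
\[
\tfrac14 D(u)\ \ge\ -\tfrac{1}{36\pi}\|\nabla u\|_{2}^{2}+\tfrac13\|u\|_{3}^{3},
\]
obtained from $-\Delta(|x|^{-1}\!*\!|u|^{2})=4\pi|u|^{2}$, which gives $F(u)>0$ for every $u\in S(c)$ and hence rules out any minimizer of $m(c)=0$.

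The second gap is the borderline case $c=c_{1}$. You correctly identify that excluding $u\equiv 0$ is the crux, but your ``plan'' of coupling the Euler--Lagrange equation with $(\mathrm P_{0})$ does not by itself produce a uniform lower bound on $\|\nabla u_{n}\|_{2}$: those two relations involve the unknown multipliers $\lambda_{n}$ and the nonlocal term and do not close. The paper again uses the analytic lower bound on the nonlocal term (its Lemma~2.1) to obtain $F(u)\ge \tfrac{32\pi-1}{64\pi}\|\nabla u\|_{2}^{2}-C\,\|\nabla u\|_{2}^{3}\,c^{1/2}$; since $F(u_{n})=m(c_{n})<0$, this forces $\|\nabla u_{n}\|_{2}$ to stay bounded away from zero. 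Without an estimate of this type, the non-vanishing step is not established.
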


\begin{remark}\label{rem1}
One always has $m(c) \leq 0$ for any $c>0$. Indeed let $u \in S(c)$ be arbitrary and consider the scaling  $u^t(x)=t^{\frac{3}{2}}u(tx)$. We have $u^t \in S(c)$ for any $t >0$ and also
$$F(u^t) = \frac{t^2}{2}\int_{\R^3} |\nabla u|^2 dx + \frac{t}{4} \int_{\R^3}\int_{\R^3}\frac{\left | u(x) \right |^2\left | u(y) \right |^2}{\left | x-y \right |}dxdy  - \frac{t^{\frac{3}{2}(p-2)}}{p} \int_{\R^3}|u|^p dx.$$
Thus $F(u^t) \to 0$ as $t \to 0$ and the conclusion follows.
\end{remark}

\begin{remark}\label{rem2}
In \cite{HK2, HK} the minimization problem on $S(c)$ for the functional
$$  \quad  F_{a,b}(u) : = \frac{1}{2}\int_{\R^3} |\nabla u|^2 dx + \frac{a}{4}\int_{\R^3}\int_{\R^3}\frac{\left | u(x) \right |^2\left | u(y) \right |^2}{\left | x-y \right |}dxdy-\frac{b}{p}\int_{\R^3}\left | u \right |^p dx$$
is considered. When $p=3$ it is proved that for each $a>0$, there exists a $b_0>0$ such that if $b> b_0$ then a minimizer exists for all $c>0$ (see Theorem 1.4 of \cite{HK2}). Theorem \ref{th1.1} (ii) implies that when $a=1$, necessarily $b_0>1$.
\end{remark}

\begin{remark}\label{rem3}
Theorem \ref{th1.1} provides a complete answer to the issue of minimizers for $F(u)$ on $S(c)$ when $ p \in [3, \frac{10}{3}]$. When $ p \in (2,3)$, this question is still open. In \cite{BS} it is proved that a minimizer exists when $c>0$ is sufficiently small. However even if $m(c) <0$, for any $ c >0$ and any minimizing sequence is bounded, we still do not know what happen for an arbitrary value of $c>0$. In trying to develop a minimization process one faces the difficulty to remove the possible dichotomy of the minimizing sequences. Also when $p \in (\frac{10}{3}, 6)$ the existence of a least energy solution is only established for $c>0$ small (see \cite{BJL}). In \cite{BJL} however and even if the result is still to be proved, strong indications are given that there do not exist least energy critical points of $F(u)$ constrained to $S(c)$ when $c>0$ is large.
\end{remark}

In addition to the non-existence results of Theorem \ref{th1.1} we also show that, taking eventually $c>0$ smaller, there are no critical points of $F(u)$ on $S(c)$. Precisely
%Compared with the non-existence results of Theorem \ref{th1.1}, we also show the following more general ones about critical points of $F(u)$ on $S(c)$. Precisely

\begin{thm}\label{th1.2}
When $ p \in (3, \frac{10}{3}]$, there exists $\bar{c}>0$ such that for any $c \in (0, \bar{c})$, there are no critical points of $F(u)$ restricted to $S(c)$. When $p=3$, for all $c>0$, $F(u)$ does not admit critical points on the constraint $S(c)$.
\end{thm}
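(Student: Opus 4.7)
The argument rests on the Pohozaev and Nehari identities satisfied by any constrained critical point, the sharp Gagliardo--Nirenberg (GN) inequality, the Hardy--Littlewood--Sobolev (HLS) bound on the Coulomb integral, and the standard spectral fact that any nontrivial $H^1(\R^3)$-solution of the Euler--Lagrange equation must have $\lambda\le 0$ (the operator $-\Delta+(|x|^{-1}\ast|u|^2)-|u|^{p-2}$ has essential spectrum $[0,\infty)$, with no positive embedded eigenvalue).

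For a critical point $u\in S(c)$, testing the Euler--Lagrange equation against $u$ and against $x\cdot\nabla u$ gives respectively the Nehari identity $A+B-C=\lambda c$ and the Pohozaev identity $A+\tfrac14 B=\tfrac{3(p-2)}{2p}C$, where $A=\|\nabla u\|_2^2$, $B$ is the Coulomb double integral and $C=\|u\|_p^p$. Eliminating $C$ yields the closed formulas
\[
\lambda c=\tfrac{p-6}{3(p-2)}\,A+\tfrac{5p-12}{6(p-2)}\,B,\qquad F(u)=\tfrac{3p-10}{6(p-2)}\,A+\tfrac{3p-8}{12(p-2)}\,B.
\]

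When $p=3$ the two formulas collapse into the single relation $\lambda c=6F(u)$. Since by Theorem~\ref{lm1.1}(iv) one has $m(c)=0$ for every $c>0$, we get $F(u)\ge 0$ hence $\lambda\ge 0$; combined with $\lambda\le 0$ this forces $F(u)=\lambda=0$. Then $u\in S(c)$ attains $m(c)$, so $u$ is a minimizer, contradicting Theorem~\ref{th1.1}(ii). Thus no critical point exists for any $c>0$ at $p=3$.

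When $p=10/3$, the mass-critical sharp GN inequality $C\le K c^{2/3}A$ combined with Pohozaev $A+\tfrac14 B=\tfrac35 C$ gives $A+\tfrac14 B\le \tfrac35 K c^{2/3}A$. Setting $\bar c:=(5/(3K))^{3/2}$, for $c<\bar c$ the right-hand coefficient is strictly less than $1$, forcing $B<0$, which is the required contradiction.

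When $p\in(3,10/3)$, the sharp GN inequality $C\le K_p^p c^{(6-p)/4}A^{3(p-2)/4}$ combined with Pohozaev yields $A^{(10-3p)/4}\le \tfrac{3(p-2)}{2p}K_p^p c^{(6-p)/4}$, hence $A\le M_p c^{(6-p)/(10-3p)}$ with exponent $(6-p)/(10-3p)>3$. Separately, since $c<c_1$ implies $m(c)=0$ by Theorem~\ref{lm1.1}, one has $F(u)\ge 0$, and the formula for $F(u)$ above then forces $B\ge\tfrac{2(10-3p)}{3p-8}A$; combining this lower bound on $B$ with the HLS--Sobolev bound $B\le C_H c^{3/2}A^{1/2}$ yields the further bound $A\le K'_p c^3$. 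Inserting these two sharp smallness estimates for $A$ back into the Pohozaev identity, and using also the bound $B\le \tfrac{2(6-p)}{5p-12}A$ coming from $\lambda\le 0$, produces for $c$ sufficiently small an inequality which is incompatible with $B\ge 0$, giving the desired contradiction. The main obstacle is precisely the quantitative matching of the competing powers of $c$ appearing in the two bounds on $A$, which is where the smallness of $c$ is used decisively.
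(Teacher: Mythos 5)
Your treatment of $p=\tfrac{10}{3}$ is correct and coincides with the paper's: from $Q(u)=0$, drop $B(u)\ge 0$ and apply the mass-critical Gagliardo--Nirenberg inequality to get $1\le C\,c^{2/3}$. The other two cases contain genuine gaps. First, both your $p=3$ argument and your $p\in(3,\tfrac{10}{3})$ argument lean on the assertion that every nontrivial solution has $\lambda\le 0$. This is not a ``standard spectral fact'' in this setting: the effective linear potential carries the long-range repulsive tail $(|x|^{-1}\ast|u|^2)(x)\sim c/|x|$, so Kato-type theorems on the absence of positive eigenvalues do not apply off the shelf, the borderline case $\lambda=0$ is not addressed, and the sign of the Lagrange multiplier is a delicate issue for Schr\"odinger--Poisson equations; this claim would itself need a proof. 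The paper avoids it entirely: for $p=3$ it proves $Q(u)>0$ for \emph{every} $u\in S(c)$ and every $c>0$, using the Poisson equation $-\Delta(|x|^{-1}\ast|u|^2)=4\pi|u|^2$ to obtain $\tfrac14 B(u)\ge -\tfrac{1}{16\pi}\|\nabla u\|_2^2+\tfrac12\|u\|_3^3$, which exactly absorbs the nonlinear term in $Q$.

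Second, and more seriously, your concluding step for $p\in(3,\tfrac{10}{3})$ does not close. Every estimate you assemble --- $A\le M_p c^{(6-p)/(10-3p)}$, $A\le K_p'c^3$, $\alpha_p A\le B\le \beta_p A$ (and one checks $\alpha_p=\tfrac{2(10-3p)}{3p-8}\le \tfrac{2(6-p)}{5p-12}=\beta_p$ on this range, so these two are compatible), $B\le C_H c^{3/2}A^{1/2}$, $C\le K_p^p c^{(6-p)/4}A^{3(p-2)/4}$ --- is either an upper bound on $A$ or a two-sided comparison of $B,C$ with $A$. This system is simultaneously satisfiable by taking $A$ (hence $B$ and $C$) sufficiently small, so no incompatibility with $B\ge 0$ can be extracted from it. What is needed is a \emph{lower} bound on $A$ for critical points, and that requires an extra analytic input: a lower bound on the Coulomb term $B(u)$ in terms of $\|u\|_p$. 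This is exactly Lemma \ref{lm2.1} of the paper (interpolate $\|u\|_p$ between $\|u\|_3$ and $\|u\|_4$, and control $\|u\|_3^3$ from above by $B(u)+\tfrac{1}{16\pi}\|\nabla u\|_2^2$ via the Poisson equation), which yields $Q(u)\ge \tfrac{64\pi-1}{64\pi}A-CA^{3/2}c^{1/2}$ (Lemma \ref{lm_q}); for a critical point $Q(u)=0$ then forces $A\ge \mathrm{const}\cdot c^{-1}$, which contradicts your (correct) upper bound $A\to 0$ as $c\to 0$. Without an estimate of this type the argument cannot conclude.
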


\begin{remark}\label{rem4}
Theorem \ref{th1.2} is, up to our knowledge, the only result where a non-existence result of  small  $L^2$ norm solutions is established for (\ref{eq1.1}). Note however that in \cite{HK1, R} it was independently proved that when $p \in (2, 3]$ there exists a $\lambda_0 <0$ such that (\ref{eq1.1}) has only trivial solution when $\lambda \in (- \infty, \lambda_0).$
\end{remark}

Another aim of this paper is to clarify and extend some results contained in \cite{CJS} where a constrained minimization problem associated to a quasilinear equation is considered. Actually in \cite{CJS} one looks for minimizers of
\begin{eqnarray}\label{4.1}
\overline{m}(c)= \inf_{\sigma(c)}\mathcal{E}(u),
\end{eqnarray}
where
\begin{eqnarray}\label{4.2}
\mathcal{E}(u)= \frac{1}{2} \int_{\R^N}|\nabla u|^2 dx + \int_{\R^N}|u|^2|\nabla u|^2dx- \frac{1}{p+1}\int_{\R^N}|u|^{p+1}dx,
\end{eqnarray}
and $$\sigma(c)= \{u\in H^1(\R^N)\ :\ \int_{\R^N}|u|^2|\nabla u|^2dx <\infty \mbox{ with } \|u\|_{L^2(\R^N)}^2=c \}.$$
Here $N \in \N^+$ and we focus on the range $p \in [1+\frac{4}{N}, 3+\frac{4}{N}]$.  Let
$$c(p,N) = \inf \{ c>0 : \overline{m}(c) <0\}.$$
\begin{thm}\label{th4.3}
\begin{itemize}
\item[(i)] If $ p \in [1+\frac{4}{N}, 3+\frac{4}{N})$, we have
\begin{itemize}
\item[a)]
 $c(p,N) \in (0, \infty);$
 \item[b)]
  $\overline{m}(c)=0$ if $c \in (0, c(p,N)]$;
\item[c)] $\overline{m}(c)<0$ if $c \in (c(p,N), \infty)$ and is strictly decreasing about $c$, as $c \in (c(p,N), \infty).$
\end{itemize}
\item[(ii)] If $p \in [1 + \frac{4}{N} , 3+\frac{4}{N})$, the mapping $c\longmapsto \overline{m}(c)$ is continuous at each $c>0$.
\item[(iii)] If $p=3+\frac{4}{N}$, we denote
\begin{eqnarray}\label{4.3'}
c_N=\inf\{c>0 \ :\ \exists\ u\in \sigma(c) \mbox{ such that } \mathcal{E}(u) \leq 0 \},
\end{eqnarray}
then $c_N \in (0,\infty)$ and
\begin{eqnarray}\label{4.3''}
\left\{\begin{matrix}
\overline{m}(c)= 0, &\mbox{as}& c \in (0, c_N);\\
\overline{m}(c)=-\infty, &\mbox{as}& \ c \in (c_N, \infty).
\end{matrix}\right.
\end{eqnarray}
\end{itemize}
\end{thm}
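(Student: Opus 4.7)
The plan is to adapt the proof of Theorem~\ref{lm1.1} to the quasilinear setting, replacing the Schr\"odinger-Poisson scaling by the $L^2$-preserving dilation $u^t(x):=t^{N/2}u(tx)$, under which
\[
\mathcal{E}(u^t)=\frac{t^2}{2}\|\nabla u\|_2^2+t^{N+2}\!\int_{\R^N}\!|u|^2|\nabla u|^2\,dx-\frac{t^{N(p-1)/2}}{p+1}\|u\|_{p+1}^{p+1}.
\]
The exponent $N(p-1)/2$ runs from $2$ at $p=1+4/N$ to $N+2$ at $p=3+4/N$, with the upper endpoint being exactly where the nonlinearity balances the quasilinear term. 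In the whole range $p\in[1+4/N,3+4/N]$ we have $\mathcal{E}(u^t)\to 0$ as $t\to 0^+$ for every $u$, so $\overline{m}(c)\le 0$ for all $c>0$.

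For the lower bounds $c(p,N)>0$ in (i)(a) and $c_N>0$ in (iii), I would use the algebraic identity $|\nabla(u^2)|^2=4|u|^2|\nabla u|^2$, which ensures $u^2\in H^1(\R^N)$ whenever $u\in\sigma(c)$. Applying the standard Gagliardo-Nirenberg inequality to $u^2$ yields an estimate of the form
\[
\|u\|_{p+1}^{p+1}\le C_{p,N}\Big(\int_{\R^N}|u|^2|\nabla u|^2\,dx\Big)^{\!\alpha}c^{\beta},
\]
with $\alpha=N(p-1)/[2(N+2)]\in(0,1]$ and $\beta>0$, and $\alpha=1$ precisely at $p=3+4/N$. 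For $p\in[1+4/N,3+4/N)$, Young's inequality absorbs $(p+1)^{-1}\|u\|_{p+1}^{p+1}$ into $\int|u|^2|\nabla u|^2$ with a factor tending to $0$ as $c\to 0$, forcing $\mathcal{E}(u)\ge 0$ and hence $\overline{m}(c)=0$ for $c$ small. At $p=3+4/N$ the same estimate gives $\mathcal{E}(u)\ge\tfrac12\|\nabla u\|_2^2+\bigl(1-C_{p,N}c^{2/N}(p+1)^{-1}\bigr)\!\int|u|^2|\nabla u|^2$, which is strictly positive on $\sigma(c)\setminus\{0\}$ once $c$ is small enough.

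To prove $c(p,N)<\infty$ and $c_N<\infty$, I would exhibit, for $c$ sufficiently large, a test function $u_0\in\sigma(c)$ with $\mathcal{E}(u_0)<0$: fix a smooth profile, combine the dilation $u^t$ above with a mass rescaling $u\mapsto\lambda^{1/2}u$, and choose the parameters so the nonlinear term dominates. For the dichotomy in (iii), at $p=3+4/N$ the formula reduces to
\[
\mathcal{E}(u^t)=\frac{t^2}{2}\|\nabla u\|_2^2+t^{N+2}\Big(\!\int|u|^2|\nabla u|^2-\frac{1}{p+1}\|u\|_{p+1}^{p+1}\Big);
\]
if some $u_0\in\sigma(c)$ satisfies $\mathcal{E}(u_0)\le 0$, then since $\|\nabla u_0\|_2^2>0$ the bracket must be strictly negative, whence $\mathcal{E}(u_0^t)\to-\infty$ as $t\to\infty$, giving $\overline{m}(c)=-\infty$ for every $c>c_N$.

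Items (i)(b), (i)(c) and (ii) are handled by the mass rescaling $v(x):=u(\theta x)$, for which $\|v\|_2^2=\theta^{-N}\|u\|_2^2$. For $c'>c$ and $\theta=(c/c')^{1/N}<1$, a short first-order expansion of $\mathcal{E}(v)-\mathcal{E}(u)$ in $\theta^{-1}-1$ shows that $\mathcal{E}(u)<0$ implies $\mathcal{E}(v)<\mathcal{E}(u)$; this yields local strict monotonicity, and combined with approximating near-minimizers by such rescalings one gets both the global strict monotonicity in (i)(c) and the continuity (ii). The main obstacle I anticipate is the borderline case $p=3+4/N$ in the lower-bound step: since the Gagliardo-Nirenberg exponent there is critical ($\alpha=1$), there is no slack in a lower power of $c$, so the argument hinges on the exact constant $C_{p,N}$ — which is the only place the analysis differs qualitatively from the Schr\"odinger-Poisson argument in Theorem~\ref{lm1.1}.
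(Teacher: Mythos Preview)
Your approach matches the paper's closely: both derive the key estimate $\|u\|_{p+1}^{p+1}\le C\|u\|_2^{2(1-\theta)}\bigl(\int|u|^2|\nabla u|^2\bigr)^{\theta N/(N-2)}$ from Gagliardo--Nirenberg applied to $u^2$ (your exponent $\alpha$ is exactly the paper's $\theta N/(N-2)$), use the $L^2$-preserving dilation $u^t$ for the dichotomy in (iii), and employ mass-type rescalings for continuity and monotonicity. For (ii) the paper uses the amplitude rescaling $u\mapsto\sqrt{c/c_n}\,u$ rather than your spatial dilation $u\mapsto u(\theta\,\cdot)$, but either works once boundedness of near-minimizers is established via the estimate above; you should state that boundedness step explicitly, since your ``first-order expansion'' argument tacitly relies on it.

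There is one genuine gap in your treatment of (iii). You write ``if some $u_0\in\sigma(c)$ satisfies $\mathcal{E}(u_0)\le 0$\ldots giving $\overline{m}(c)=-\infty$ for every $c>c_N$,'' but the definition of $c_N$ as an infimum only guarantees such a $u_0$ for \emph{some} $c'$ arbitrarily close to $c_N$ from above, not automatically for every $c>c_N$. The paper closes this by contrapositive: if $\mathcal{E}>0$ on all of $\sigma(c)$ for some $c>c_N$, then rescaling any $u\in\sigma(\hat c)$ with $\hat c\in[c_N,c)$ via $u_t(x)=u(t^{-1/N}x)\in\sigma(c)$, $t=c/\hat c>1$, yields $\mathcal{E}(u_t)\le t\,\mathcal{E}(u)$, forcing $\mathcal{E}(u)>0$ and contradicting the definition of $c_N$. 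Your own mass rescaling $v=u(\theta\,\cdot)$ with $\theta<1$ handles this equally well, so the fix is immediate---but it must be stated. Finally, your worry about the ``exact constant $C_{p,N}$'' at $p=3+4/N$ is misplaced: only finiteness of $C_{p,N}$ is needed for $c_N>0$, since the multiplicative factor $c^{2/N}$ still tends to $0$ as $c\to 0$.
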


Concerning the existence or non-existence of minimizers we have
\begin{thm}\label{th4.33}
\begin{itemize}
\item[(i)] If $ p \in (1+\frac{4}{N}, 3+\frac{4}{N})$, then $\overline{m}(c)$ admits a minimizer if and only if $c \in [c(p,N), \infty).$
\item[(ii)] If $p = 3 + \frac{4}{N}$, $\overline{m}(c)$ has no minimizer for all $c \in (0, \infty).$
\end{itemize}
\end{thm}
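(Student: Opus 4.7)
I argue by contradiction: suppose $u\in\sigma(c)$ realizes $\overline{m}(c)=0$. The $L^2$-preserving dilation $u_t(x)=t^{N/2}u(tx)$ stays in $\sigma(c)$ for every $t>0$, so the function
\[
g(t):=\mathcal{E}(u_t)=\tfrac{t^2}{2}A+t^{N+2}B-\tfrac{t^{N(p-1)/2}}{p+1}C,\qquad A:=\|\nabla u\|_{L^2}^2,\ B:=\int u^2|\nabla u|^2\,dx,\ C:=\int|u|^{p+1}\,dx,
\]
attains a global minimum at $t=1$. Hence $g'(1)=0$, and eliminating $C$ via $\mathcal{E}(u)=0$ (i.e.\ $C=(p+1)(A/2+B)$) collapses this Pohozaev-type identity to the linear relation
\[
\bigl(N(p-1)-4\bigr)\,A\;=\;2\bigl(3N+4-Np\bigr)\,B,
\]
both coefficients being strictly positive on $(1+4/N,3+4/N)$. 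Next, slide along $\lambda\mapsto\lambda u\in\sigma(\lambda^2 c)$. For $\lambda$ close enough to $1$ we have $\lambda^2 c<c(p,N)$, so Theorem~\ref{th4.3}(i)(b) forces $h(\lambda):=\mathcal{E}(\lambda u)\ge\overline{m}(\lambda^2 c)=0=h(1)$, giving $h'(1)=0$. Yet a direct computation yields $h'(1)=A+4B-C=-\tfrac{p-1}{2}A+(3-p)B$, which upon substituting the linear relation simplifies to $h'(1)=-8B/(N(p-1)-4)<0$ (since $B>0$ for $u\not\equiv 0$), the desired contradiction.

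\textbf{Part (i), existence for $c\ge c(p,N)$.} On $(c(p,N),\infty)$ we have $\overline{m}(c)<0$, and I run Lions' concentration--compactness on a minimizing sequence $(u_n)\subset\sigma(c)$. Boundedness of $\|\nabla u_n\|_{L^2}$ and of $\int u_n^2|\nabla u_n|^2\,dx$ follows from $\mathcal{E}(u_n)\to\overline{m}(c)<0$ and a Gagliardo--Nirenberg estimate, as in \cite{CJS}. Vanishing is excluded: it would force $\|u_n\|_{L^{p+1}}\to 0$ and thus $\liminf\mathcal{E}(u_n)\ge 0>\overline{m}(c)$. Strict subadditivity $\overline{m}(c)<\overline{m}(c_1)+\overline{m}(c-c_1)$ for $c_1\in(0,c)$ follows from Theorem~\ref{th4.3}(i)(c): when one of $c_1,c-c_1$ lies in $(0,c(p,N)]$ the corresponding $\overline{m}$ vanishes and strict monotonicity yields the inequality; the remaining case is handled by the standard scaling $\sqrt\theta\,u$, $\theta>1$. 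Dichotomy is thereby ruled out and a translated subsequence converges strongly to a minimizer. For the threshold $c=c(p,N)$, I pick $c_n\downarrow c(p,N)$ with minimizers $u_n\in\sigma(c_n)$ from the previous step, rescale to $\tilde u_n=\sqrt{c(p,N)/c_n}\,u_n\in\sigma(c(p,N))$, use continuity (Theorem~\ref{th4.3}(ii)) to see $\mathcal{E}(\tilde u_n)\to 0$, exclude vanishing via the $L^{p+1}$ lower bound inherited from $\mathcal{E}(u_n)<0$, and extract a minimizer.

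\textbf{Part (ii).} For $c>c_N$ the statement is immediate since $\overline{m}(c)=-\infty$. For $c<c_N$, Theorem~\ref{th4.3}(iii) gives $\overline{m}(c)=0$ directly. For $c=c_N$, I first observe $\overline{m}(c_N)=0$: if some $u\in\sigma(c_N)$ had $\mathcal{E}(u)<0$, then $\alpha u\in\sigma(\alpha^2 c_N)$ with $\alpha<1$ close to $1$ would still satisfy $\mathcal{E}(\alpha u)<0$ at mass $\alpha^2 c_N<c_N$, violating the definition of $c_N$. In either case, a minimizer $u$ obeys $\mathcal{E}(u)=0$, and the Pohozaev identity $A+(N+2)B=\tfrac{N(p-1)}{2(p+1)}C$ from Part~(i) applies. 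Specializing to $p=3+4/N$, where $\tfrac{N(p-1)}{2(p+1)}=\tfrac{N(N+2)}{4(N+1)}$ and $C=\tfrac{4(N+1)}{N}(A/2+B)$, substitution collapses the identity to $A=\tfrac{N+2}{2}A$, i.e.\ $A=0$. Then $u$ is constant and hence $u\equiv 0$, contradicting $\|u\|_{L^2}^2=c>0$.

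\textbf{Main obstacle.} The non-existence arguments, once the Pohozaev identity $g'(1)=0$ is noted, reduce to a single algebraic simplification (in Part~(i) driven by the cancellation $-(p-1)(3N+4-Np)+(3-p)(N(p-1)-4)=-8$). The real difficulty lies in the existence half of Part~(i): establishing the \emph{a priori} control of the non-coercive quasilinear term $\int u_n^2|\nabla u_n|^2\,dx$ along minimizing sequences, and excluding the vanishing scenario at the borderline $c=c(p,N)$, where the usual leverage provided by the strict negativity of $\overline{m}$ is lost.
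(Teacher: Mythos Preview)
Your non-existence arguments are correct and take a genuinely different route from the paper. For Part~(i), the paper proves a single scaling lemma (Lemma~\ref{lm4.1}): if $\overline m(\bar c)=0$ is attained, the spatial dilation $\bar u(t^{-1/N}x)$ pushes the mass up while strictly lowering the energy, so $\overline m(c)<0$ for all $c>\bar c$, contradicting $\overline m(c)=0$ on $(0,c(p,N)]$. You instead combine the $L^2$-preserving dilation with the amplitude variation $\lambda u$, obtain two first-order conditions, and reduce to the neat identity $-(p-1)(3N+4-Np)+(3-p)(N(p-1)-4)=-8$. Both approaches are short; the paper's has the advantage of being one inequality, while yours avoids ever leaving the fixed constraint $\sigma(c)$. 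For Part~(ii) the contrast is sharper: the paper writes down the Euler--Lagrange equation, invokes the Pohozaev identity (which needs some regularity of the weak solution), combines it with the Nehari relation, and arrives at $\mathcal E(u_c)=\frac{N}{2N+4}\|\nabla u_c\|_2^2>0$. Your dilation argument $g'(1)=0$ yields the same identity $A+(N+2)B=\tfrac{N(p-1)}{2(p+1)}C$ by purely variational means, sidestepping any PDE regularity issues. This is a real simplification.

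There is, however, a gap at the threshold $c=c(p,N)$ in Part~(i). After excluding vanishing you write ``extract a minimizer'', but this is exactly the step that does not follow from concentration--compactness alone: at $\overline m(c(p,N))=0$ strict subadditivity fails (both pieces could sit in $(0,c(p,N)]$ where $\overline m$ vanishes), so dichotomy is not ruled out, and a Brezis--Lieb splitting for the quasilinear term $\int u^2|\nabla u|^2$ is not standard. The paper handles this by exploiting that the minimizers at each $c_n>c(p,N)$ supplied by \cite{CJS} can be taken Schwarz symmetric; the compact embedding $H^1_{\mathrm{rad}}\hookrightarrow L^{p+1}$ then gives strong $L^{p+1}$ convergence to the weak limit $u_0$, whence $\mathcal E(u_0)\le 0$, and finally the paper's Lemma~\ref{lm4.1} (or equivalently your own non-existence result) forces $\|u_0\|_2^2=c(p,N)$. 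Your sketch is missing this compactness mechanism; without symmetry or an explicit splitting lemma for the quasilinear term, the passage from ``non-vanishing'' to ``minimizer'' is not justified.
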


\begin{remark}\label{rem-critical}
We note that in \cite{CJS} it was proved that when $p\in (1,1+\frac{4}{N})$, for all $c>0$, $\overline{m}(c)<0$ and $\overline{m}(c)$ admits a minimizer. When $p=1+\frac{4}{N}$, we conjecture that the conclusion of Theorem \ref{th4.33} (i) also holds. As for $p\in (3+\frac{4}{N}, \infty)$, $\overline{m}(c)=-\infty$ for any $c>0$.
\end{remark}

\begin{remark}\label{rem4}
We point out that parts of Theorems \ref{th4.3} and \ref{th4.33} are already contained in Theorem 1.12 of \cite{CJS}. However, on one hand we provide here additional information. In particular we settle the question of existence for the threshold value $c(p,N)$ which requires a special treatment. On the other hand some statements of Theorem 1.12  are wrong, in particular concerning the case $p = 3 + \frac{4}{N}$. There are also some gaps in the proofs of \cite{CJS}. In particular it is not proved completely that there are no minimizer when $c \in (0, c(p,N))$.
\end{remark}
\begin{remark}\label{rem5}
In \cite{CS}, the minimization problem (\ref{4.1}) is studied and the question of finding explicit bounds on $c(p,N)$ and $c_N$ is addressed by a combination of analytical and numerical arguments in dimension $N=3$. In particular, when $p=3+\frac{4}{N}$ a $c_b>0$ such that $\overline{m}(c)=0$ if $0<c\leq c_b$ and a $c^b>0$ such that $\overline{m}(c)=-\infty$ if $c> c^b$ are explicitly given (see Proposition 2.1, points (4) and (5) of \cite{CS}). Their values are $c_b\approx 19.73$ and $c^b \approx 85.09$. Theorem \ref{th4.3} (iii) complements these results  showing that the change from $\overline{m}(c)=0$ to $\overline{m}(c)=-\infty$ occurs abruptly at the value $c_N$. We also point out that our results hold for any dimension $N \in \N^+$.
\end{remark}

Finally, similarly to Theorem \ref{th1.2} we obtain
\begin{thm}\label{th4.333}
Assume that $ p \in [1+\frac{4}{N}, 3+\frac{4}{N}]$ holds, then there exists a $\hat{c}>0$ such that for all $c\in (0, \hat{c})$, the functional $\mathcal{E}(u)$, restricted to $\sigma(c)$, has no critical points.
\end{thm}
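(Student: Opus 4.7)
The plan is to combine a Pohozaev-type identity with a carefully chosen Gagliardo-Nirenberg inequality to force a positive lower bound on $c$ for any $u \in \sigma(c)$ that is a constrained critical point. First I would observe that since $u_t(x) = t^{N/2}u(tx)$ belongs to $\sigma(c)$ for every $t > 0$, any critical point $u$ of $\mathcal{E}$ restricted to $\sigma(c)$ satisfies $\frac{d}{dt}\mathcal{E}(u_t)|_{t=1}=0$. Using the scalings $\|\nabla u_t\|_{L^2}^2 = t^2 \|\nabla u\|_{L^2}^2$, $\int |u_t|^2|\nabla u_t|^2 = t^{N+2}\int |u|^2|\nabla u|^2$ and $\|u_t\|_{L^{p+1}}^{p+1} = t^{N(p-1)/2}\|u\|_{L^{p+1}}^{p+1}$, this yields the Pohozaev-type identity
\begin{equation*}
\|\nabla u\|_{L^2}^2 + (N+2)\int_{\R^N}|u|^2|\nabla u|^2\,dx = \frac{N(p-1)}{2(p+1)}\|u\|_{L^{p+1}}^{p+1}.
\end{equation*}

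Next I would establish a Gagliardo-Nirenberg inequality tailored to this setting. At $p = 1+\tfrac{4}{N}$ one has the classical $L^2$-critical estimate $\|u\|_{L^{2+4/N}}^{2+4/N} \leq K_1\, c^{2/N}\,\|\nabla u\|_{L^2}^2$, while at $p = 3+\tfrac{4}{N}$ applying the analogous Gagliardo-Nirenberg inequality to $v = u^2$ (so that $\|\nabla v\|_{L^2}^2 = 4\int |u|^2|\nabla u|^2\,dx$ and $\|v\|_{L^1} = c$) gives $\|u\|_{L^{4+4/N}}^{4+4/N} \leq K_2\, c^{2/N}\,\int |u|^2|\nabla u|^2\,dx$. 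For any $p \in [1+\tfrac{4}{N},3+\tfrac{4}{N}]$, Hölder interpolation between $L^{2+4/N}$ and $L^{4+4/N}$ combined with these two endpoint estimates produces, writing $A := \|\nabla u\|_{L^2}^2$ and $B := \int |u|^2|\nabla u|^2\,dx$,
\begin{equation*}
\|u\|_{L^{p+1}}^{p+1} \leq K\, c^{2/N}\, A^{\alpha_1}B^{\alpha_2},
\end{equation*}
with $\alpha_1,\alpha_2 \geq 0$ satisfying $\alpha_1 + \alpha_2 = 1$ (and reducing to $\alpha_2 = 0$ or $\alpha_1 = 0$ at the respective endpoints). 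The precise matching of the power $c^{2/N}$ on the right comes from the Hölder exponent condition $\tfrac{1}{p+1} = \tfrac{1-s}{2+4/N} + \tfrac{s}{4+4/N}$, which forces the sum of the two $c^{2/N}$-exponents to equal exactly one.

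Combining the Pohozaev identity with this Gagliardo-Nirenberg inequality and the elementary convexity bound $A^{\alpha_1}B^{\alpha_2} \leq \alpha_1 A + \alpha_2 B \leq A + (N+2)B$ gives
\begin{equation*}
A^{\alpha_1}B^{\alpha_2} \leq A + (N+2)B = \frac{N(p-1)}{2(p+1)}\|u\|_{L^{p+1}}^{p+1} \leq K'\,c^{2/N}\,A^{\alpha_1}B^{\alpha_2}.
\end{equation*}
For any non-trivial $u \in \sigma(c)$ one has $A > 0$ (otherwise $u$ would be constant, hence zero in $L^2$) and $B > 0$ (otherwise $u^2$ would be constant, again forcing $u \equiv 0$), so one can divide through by $A^{\alpha_1}B^{\alpha_2}$ to obtain $c \geq \hat{c}$ with $\hat{c} := (1/K')^{N/2} > 0$; the contrapositive is precisely the claim. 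I expect the main technical difficulty to be the second step: setting up the interpolation so that the $c$-power on the right is exactly the $c^{2/N}$ dictated by the Pohozaev identity, uniformly across the closed range $[1+\tfrac{4}{N},3+\tfrac{4}{N}]$ including the endpoints, and verifying the endpoint Sobolev-type estimate for $v = u^2$ in all dimensions $N \geq 1$.
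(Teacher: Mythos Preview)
Your argument is correct and takes a genuinely different route from the paper. Both proofs start from the same Pohozaev-type identity $A+(N+2)B=\frac{N(p-1)}{2(p+1)}\|u\|_{p+1}^{p+1}$; the paper obtains it by combining the weak Euler--Lagrange equation with the Pohozaev identity of \cite{CJS}, while you get it from the $L^2$-preserving dilation $u_t=t^{N/2}u(t\,\cdot)$ (just be aware that making $\tfrac{d}{dt}\mathcal{E}(u_t)\big|_{t=1}=0$ rigorous for this non-$C^1$ quasilinear functional ultimately passes through the same weak equation and Pohozaev argument). The substantive difference is in the Gagliardo--Nirenberg step. The paper relies on the single estimate $\|u\|_{p+1}^{p+1}\le C\,c^{1-\theta}B^{\theta N/(N-2)}$ from \cite{CJS}, which involves only $B$; because the exponent $\theta N/(N-2)$ is strictly less than $1$ when $p<3+\tfrac4N$, this does not close directly, so the paper argues by contradiction with $c_n\to 0$, first forces $A_n,B_n\to 0$, and then splits into the sub-ranges $p\le\tfrac{N+2}{N-2}$ and $p>\tfrac{N+2}{N-2}$ with a different auxiliary estimate in each to show $\overline Q(u_n)>0$. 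Your interpolation between the $L^2$-critical Gagliardo--Nirenberg inequality for $u$ and the analogous one for $u^2$ yields instead $\|u\|_{p+1}^{p+1}\le K\,c^{2/N}A^{\alpha_1}B^{\alpha_2}$ with $\alpha_1+\alpha_2=1$, and then Young's inequality $A^{\alpha_1}B^{\alpha_2}\le\alpha_1 A+\alpha_2 B\le A+(N+2)B$ closes the loop in a single line, with no contradiction argument and no case distinction. What this buys you is a uniform, direct treatment of the entire closed interval $[1+\tfrac4N,3+\tfrac4N]$ and an explicit lower bound $\hat c=(1/K')^{N/2}$ expressed in terms of the optimal constants of the two endpoint inequalities.
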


%Alternatively, when $p=3+\frac{4}{N}$, the result (\ref{4.3''}) in (iv) is inspired by the recent work \cite{CS}, where the same problem is considered numerically in dimension N=3. However here the result is more precise and holds in any dimensions. Especially, the proofs are not parallel with theirs.\medskip

\noindent \textbf{Acknowledgement:} The authors  thank the referee for its comments which have permitted to simplify several proofs in the paper. \\

\noindent \textbf{Notations:} For convenience we set
$$A(u):=\int_{\R^3} |\nabla u|^{2}dx,\ \ \ B(u):=\int_{\R^3}\int_{\R^3}\frac{\left | u(x) \right |^2\left | u(y) \right |^2}{\left | x-y \right |}dxdy$$
$$ \ C(u):=\int_{\R^3} |u|^{p}dx,\ \ \ D(u):= \int_{\R^3} | u|^{2}dx.$$
Then
\begin{eqnarray}\label{func}
F(u)=\frac{1}{2}A(u)+\frac{1}{4}B(u)-\frac{1}{p}C(u).
\end{eqnarray}
Also we denote by $||\cdot||_p$ the standard norm on $L^p(\R^N)$. Throughout the paper we shall denote by $C>0$ various positive constants which may vary from one line to another and which are not important for the analysis of the problem.

\section{Preliminary results}\label{Sec1}
To obtain our non-existence results we use the fact that any critical point of $F(u)$ on $S(c)$ satisfies $Q(u)=0$ where
\begin{eqnarray*}
 Q(u):=\int_{\R^3} |\nabla u|^{2}dx  +\frac{1}{4}\int_{\R^3}\int_{\R^3}\frac{\left | u(x) \right |^2\left | u(y) \right |^2}{\left | x-y \right |}dxdy -\frac{3(p-2)}{2p}\int_{\R^3} |u|^{p}dx.
\end{eqnarray*}
Indeed we have

\begin{lem}\label{lm2.3}
If $u_0$ is a critical point of $F(u)$ on $S(c)$, then $Q(u_0)=0$.
\end{lem}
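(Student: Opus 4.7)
The natural strategy is to test criticality against the $L^2$-preserving scaling already used in Remark \ref{rem1}, namely $u_0^t(x)=t^{3/2}u_0(tx)$. Since $\|u_0^t\|_2^2 = \|u_0\|_2^2 = c$ for every $t>0$, the curve $t\mapsto u_0^t$ lies entirely on $S(c)$ and passes through $u_0$ at $t=1$. If $u_0$ is a critical point of $F$ restricted to $S(c)$, then in particular the scalar function $t\mapsto F(u_0^t)$ must be stationary at $t=1$, so computing $\tfrac{d}{dt}F(u_0^t)\big|_{t=1}$ and checking that it equals $Q(u_0)$ finishes the proof.

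The computation is the one done in Remark \ref{rem1}: changes of variables give $A(u_0^t)=t^2 A(u_0)$, $B(u_0^t)=t\, B(u_0)$, and $C(u_0^t)=t^{3(p-2)/2}C(u_0)$, hence
\begin{equation*}
F(u_0^t)=\frac{t^2}{2}A(u_0)+\frac{t}{4}B(u_0)-\frac{t^{3(p-2)/2}}{p}C(u_0).
\end{equation*}
Differentiating in $t$ and evaluating at $t=1$ produces exactly
\begin{equation*}
A(u_0)+\frac14 B(u_0)-\frac{3(p-2)}{2p}C(u_0)=Q(u_0),
\end{equation*}
so $Q(u_0)=0$ will follow as soon as stationarity of $t\mapsto F(u_0^t)$ at $t=1$ is justified.

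The only real point to check is this justification. One needs to know that $t\mapsto u_0^t$ is a $C^1$ curve in $H^1(\R^3)$ (clear for $u_0\in H^1$ by a direct density argument approximating $u_0$ by smooth compactly supported functions and passing to the limit), that it lies on the $C^1$ manifold $S(c)$ (immediate), and that $F$ is $C^1$ on $H^1(\R^3)$ (recalled in the introduction, citing \cite{R}). Under these conditions, being a constrained critical point of $F$ on $S(c)$ implies that $dF(u_0)[\varphi]=0$ for every tangent vector $\varphi$ to $S(c)$ at $u_0$, and in particular for $\varphi=\frac{d}{dt}u_0^t\big|_{t=1}$, which yields the desired $Q(u_0)=0$. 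I expect no genuine obstacle; the only mild technicality is verifying the differentiability of the scaling curve and interchanging $\tfrac{d}{dt}$ with the integrals defining $A$, $B$, $C$, which is routine by dominated convergence.
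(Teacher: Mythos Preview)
Your scaling argument captures the correct heuristic, but the justification you give for the differentiability of $t\mapsto u_0^t$ as a curve in $H^1(\R^3)$ does not work. The formal tangent vector at $t=1$ is $\tfrac{3}{2}u_0 + x\cdot\nabla u_0$; for this to lie in $H^1$ one needs $x\cdot\nabla u_0\in L^2$ and, for the $H^1$-norm, second derivatives of $u_0$ in a weighted $L^2$ space. Neither follows from $u_0\in H^1$ alone. The density argument you propose fails for exactly this reason: each approximating curve built from $u_0^{(n)}\in C_c^\infty$ is $C^1$, but the tangent vectors $\tfrac{3}{2}u_0^{(n)}+x\cdot\nabla u_0^{(n)}$ need not converge in $H^1$ (or even in $L^2$), so one cannot conclude that the limiting curve is $C^1$. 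Without $C^1$-regularity of the curve in $H^1$ you cannot apply the chain rule to pass from ``$u_0$ is a constrained critical point of $F$'' to ``$\tfrac{d}{dt}F(u_0^t)\big|_{t=1}=0$''; the fact that the scalar function $t\mapsto F(u_0^t)$ is smooth is not enough.

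The paper proceeds along a different and rigorous route. It first uses the Lagrange multiplier rule to write $F'(u_0)=\lambda_0 u_0$ for some $\lambda_0\in\R$, so that $u_0$ is a free critical point of $S_{\lambda_0}(u)=F(u)-\tfrac{\lambda_0}{2}\|u\|_2^2$ and hence a weak solution of \eqref{eq1.1}. Two identities for such solutions are then combined: the Nehari-type identity $I_{\lambda_0}(u_0)=\langle S'_{\lambda_0}(u_0),u_0\rangle=0$, which is immediate, and the Pohozaev identity $P_{\lambda_0}(u_0)=0$, which is \emph{cited} from \cite{DM,R} where it is proved via elliptic regularity and a careful integration-by-parts argument. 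The algebraic relation $\tfrac{3}{2}I_{\lambda}(u)-P_{\lambda}(u)=Q(u)$ then yields $Q(u_0)=0$. This is not merely a stylistic difference: the Pohozaev identity is precisely what one obtains by testing the equation against $x\cdot\nabla u_0$, and its rigorous proof supplies exactly the regularity and decay of $u_0$ that your scaling argument is missing. In other words, making your approach rigorous would force you through the same Pohozaev machinery the paper invokes.
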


\begin{proof} First we denote
\begin{eqnarray}\label{2.6}
I_{\lambda}(u)&:=&\langle S_{\lambda }'(u),u \rangle =A(u)-\lambda D(u)+B(u)-C(u), \\
P_{\lambda}(u)&:=&\frac{1}{2}A(u)-\frac{3}{2}\lambda D(u)+\frac{5}{4}B(u)-\frac{3}{p}C(u).
\end{eqnarray}
Here $\lambda \in \R$ is a parameter and $S_{\lambda }(u)$ is the energy functional corresponding to the equation (\ref{eq1.1}), i.e.
\begin{eqnarray}\label{2.8}
S_{\lambda }(u): =\frac{1}{2}A(u)-\frac{\lambda}{2}D(u)+\frac{1}{4}B(u)-\frac{1}{p}C(u).
\end{eqnarray}
Clearly $S_{\lambda}(u)=F(u)-\frac{\lambda}{2}D(u)$ and simple calculations imply that
\begin{eqnarray}\label{2.9}
\frac{3}{2}I_{\lambda}(u)-P_{\lambda}(u)=Q(u).
\end{eqnarray}
Now from \cite{DM} or Theorem 2.2 of \cite{R}, we know that $P_{\lambda}(u)=0$ is a Pohozaev identity for the Schr\"odinger-Poisson equation (\ref{eq1.1}). In particular any critical point $u$ of $S_{\lambda}(u)$ satisfies $P_{\lambda}(u)=0$. \medskip

On the other hand, since $u_0$ is a critical point of $F(u)$ restricted to $S(c)$, there exists a Lagrange multiplier $\lambda_0 \in \R$, such that
$$F'(u_0) = \lambda_0  u_0.$$
Thus for any $\phi \in H^1(\R^3)$,
\begin{eqnarray}\label{2.10}
\langle S_{\lambda_0}'(u_0), \phi \rangle = \langle F'(u_0)- \lambda_0 u_0, \phi \rangle =0,
\end{eqnarray}
which shows that $u_0$ is also a critical point of $S_{\lambda_0}(u)$. Hence
$$P_{\lambda_0}(u_0)=0,  \quad I_{\lambda_0}(u_0) =\langle S_{\lambda_0}'(u_0), u_0 \rangle = 0,$$
and $Q(u_0)=0$ follows from (\ref{2.9}).
\end{proof}

We now give an estimate on the nonlocal term, which is useful to control the functionals $F(u)$ and $Q(u)$.
\begin{lem}\label{lm2.1}
When $ p \in [3,4]$, there exists a constant $C>0$, depending only on $p$, such that, for any $u \in S(c)$,
\begin{eqnarray}\label{2.1}
\int_{\R^3}\int_{\R^3}\frac{\left | u(x) \right |^2\left | u(y) \right |^2}{\left | x-y \right |}dxdy \geq -\frac{1}{16\pi}\left \| \triangledown u \right \|_{2}^2+C\frac{ \left \| u \right \|_{p}^{\frac{p}{4-p}}}{\left \| \triangledown u \right \|_{2}^{\frac{3(p-3)}{4-p}}\left \| u \right \|_{2}^{\frac{p-3}{4-p}}}.
\end{eqnarray}
\end{lem}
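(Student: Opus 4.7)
The plan is to reduce the inequality to the endpoint $p=3$ and then obtain the general case by $L^p$-interpolation between $L^3$ and $L^4$. Indeed, raising the claimed inequality to the power $(4-p)/p$ turns it into
\[
\|u\|_p \le C\Bigl( B(u)+\tfrac{1}{16\pi}\|\nabla u\|_2^2 \Bigr)^{(4-p)/p} \|\nabla u\|_2^{3(p-3)/p}\,\|u\|_2^{(p-3)/p},
\]
which, after factoring the last two terms as a power of the Gagliardo--Nirenberg bound for $\|u\|_4^4$, is nothing but an $L^p$-interpolation between $L^3$ (controlled by $B(u)$ and $\|\nabla u\|_2^2$ via the $p=3$ case) and $L^4$ (controlled by Gagliardo--Nirenberg).

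For the endpoint $p=3$, which is the only place where the nonlocal term actually enters, I would introduce the Newtonian potential $\phi_u(x):=\frac{1}{4\pi}\int_{\R^3}\frac{|u(y)|^2}{|x-y|}\,dy$ of $|u|^2$, so that $-\Delta\phi_u=|u|^2$ in $\R^3$ and, by integration by parts,
\[
\|\nabla\phi_u\|_2^2 \,=\, \int_{\R^3} \phi_u\,|u|^2\,dx \,=\, \tfrac{1}{4\pi}B(u).
\]
Then $\|u\|_3^3=\int_{\R^3} |u|(-\Delta\phi_u)\,dx=\int_{\R^3}\nabla|u|\cdot\nabla\phi_u\,dx$, and Cauchy--Schwarz together with the diamagnetic inequality $\|\nabla|u|\|_2\le\|\nabla u\|_2$ gives
\[
\|u\|_3^3 \,\le\, \|\nabla u\|_2\,\|\nabla\phi_u\|_2 \,=\, \tfrac{1}{2\sqrt{\pi}}\,\|\nabla u\|_2\sqrt{B(u)}.
\]
Young's inequality $ab\le\delta a^2+b^2/(4\delta)$ with the choice $\delta=1/(8\sqrt{\pi})$ delivers exactly
\[
\|u\|_3^3 \,\le\, \tfrac{1}{16\pi}\,\|\nabla u\|_2^2 \,+\, B(u),
\]
which is \eqref{2.1} at $p=3$ with constant $1$.

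For general $p\in(3,4]$, the log-convexity of the $L^p$ norms applied with $\tfrac{1}{p}=\tfrac{\theta}{3}+\tfrac{1-\theta}{4}$ and $\theta=\tfrac{3(4-p)}{p}$ yields
\[
\|u\|_p \,\le\, \|u\|_3^{3(4-p)/p}\,\|u\|_4^{4(p-3)/p}.
\]
Raising this to the power $p/(4-p)$ collapses the $L^3$ exponent to $3$, giving
\[
\|u\|_p^{p/(4-p)} \,\le\, \|u\|_3^3\cdot \|u\|_4^{4(p-3)/(4-p)}.
\]
I would then substitute the bound for $\|u\|_3^3$ from the previous step and apply the Gagliardo--Nirenberg inequality $\|u\|_4^4\le C\|\nabla u\|_2^3\|u\|_2$ raised to the power $(p-3)/(4-p)$ for the second factor; after rearrangement this is exactly the lemma.

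The only delicate point I expect is securing the exact constant $\frac{1}{16\pi}$ (as opposed to some unspecified $C(p)$) in front of $\|\nabla u\|_2^2$, since this sharp value is needed in the later arguments where the right-hand side must combine cleanly with the $\frac{1}{2}\|\nabla u\|_2^2$ and $\frac{1}{4}B(u)$ coming from $F(u)$. This is delivered for free by the precise choice of $\delta$ in Young's inequality at the $p=3$ step, and it is preserved under the interpolation above because the nonlocal quantity is carried by the $\|u\|_3^3$ factor alone, while $\|u\|_4$ only contributes $\|\nabla u\|_2$ and $\|u\|_2$ terms that end up as the denominator in the final statement.
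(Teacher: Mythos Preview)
Your proof is correct and follows essentially the same route as the paper: both arguments introduce the Newtonian potential of $|u|^2$, integrate by parts against $|u|$ to obtain the key endpoint estimate $\|u\|_3^3 \le B(u)+\tfrac{1}{16\pi}\|\nabla u\|_2^2$, and then combine this with the Gagliardo--Nirenberg bound $\|u\|_4^4\le C\|\nabla u\|_2^3\|u\|_2$ via the $L^3$--$L^4$ interpolation $\|u\|_p^p\le \|u\|_3^{3(4-p)}\|u\|_4^{4(p-3)}$. The only cosmetic difference is that the paper writes the $p=3$ step with a free parameter $\eta$ in the Young split (taking $\eta=1$ here), whereas you apply Cauchy--Schwarz first and then Young with the specific $\delta=1/(8\sqrt{\pi})$; these are the same computation.
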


\begin{proof} Since $p \in [3,4]$, by interpolation, we have
\begin{eqnarray}\label{2.2}
\left \| u \right \|_{p}^p \leq \left \| u \right \|_{3}^{3(4-p)}\left \| u \right \|_{4}^{4(p-3)}.
\end{eqnarray}
In addition, since $(|x|^{-1}\ast |u|^2)\in \mathcal{D}^{1,2}(\R^3)$ solves the equation
\begin{eqnarray}\label{2.2.1}
-\Delta \Phi=4 \pi |u|^2\ \ \ \mbox{in } \R^{3},
\end{eqnarray}
on one hand multiplying (\ref{2.2.1}) by $(|x|^{-1}\ast |u|^2)\in \mathcal{D}^{1,2}(\R^3)$ and integrating we get
\begin{eqnarray}\label{2.2.2}
4\pi \int_{\R^3}(|x|^{-1}\ast |u|^2)|u|^2 dx = \int_{\R^3}|\nabla (|x|^{-1}\ast |u|^2)|^2 dx.
\end{eqnarray}
On the other hand, multiplying (\ref{2.2.1}) by $|u|$ and integrating we get for any $\eta >0$,
\begin{eqnarray}\label{kikuchi}
4\pi \eta \int_{\R^3}|u|^3 dx &=& \eta \int_{\R^3}- \Delta (|x|^{-1}\ast |u|^2)|u|dx  \nonumber \\
&\leq& \eta \int_{\R^3}\nabla (|x|^{-1}\ast |u|^2)\cdot \nabla |u|dx \\
&\leq & \int_{\R^3}|\nabla (|x|^{-1}\ast |u|^2)|^2 dx + \frac{\eta ^2}{4}\int_{\R^3}|\nabla u|^2dx. \nonumber
\end{eqnarray}
Thus, taking $\eta=1$ in (\ref{kikuchi}) it follows from (\ref{2.2.2}) and (\ref{kikuchi}) that
\begin{eqnarray}\label{2.3}
\int_{\R^3}\left | u \right |^3dx \leq \int_{\R^3}\int_{\R^3}\frac{\left | u(x) \right |^2\left | u(y) \right |^2}{\left | x-y \right |}dxdy + \frac{1}{16\pi} \left \| \triangledown u \right \|_{2}^2.
\end{eqnarray}
Now, using Gagliardo-Nirenberg's inequality, there exists a constant $C>0$, depending only on $p$, such that
\begin{eqnarray}\label{2.4}
\int_{\R^3}\left | u \right |^4dx \leq C\left \| \triangledown u \right \|_{2}^3\left \| u \right \|_2.
\end{eqnarray}
Taking (\ref{2.3}) and (\ref{2.4}) into (\ref{2.2}), we obtain
\begin{equation*}
\left \| u \right \|_{p}^p \leq C\left ( \int_{\R^3} \int_{\R^3} \frac{\left | u(x) \right |^2 \left | u(y) \right |^2}{\left | x-y \right |}  dxdy+\frac{1}{16\pi}\left \| \nabla u \right \|_{2}^2 \right )^{(4-p)}\left \| \nabla u \right \|_{2}^{3(p-3)}\left \| u \right \|_{2}^{(p-3)},
\end{equation*}
which implies (\ref{2.1}).
\end{proof}

The estimate (\ref{2.1}) leads to a lower bound on $Q(u)$.
\begin{lem}\label{lm_q} When $p \in (3,  \frac{10}{3})$,  there exists a constant $C>0$, depending only on $p$, such that, for any $u \in S(c)$
\begin{eqnarray}\label{lm_q.1}
Q(u)\geq\frac{64\pi-1}{64\pi}A(u)- C\cdot A(u)^{\frac{3}{2}} \cdot c^{\frac{1}{2}}.
\end{eqnarray}
\end{lem}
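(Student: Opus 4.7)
The plan is to apply Lemma \ref{lm2.1} to the Coulomb term in $Q(u) = A(u) + \frac{1}{4}B(u) - \frac{3(p-2)}{2p}C(u)$. Since $(3,\tfrac{10}{3}) \subset [3,4]$, Lemma \ref{lm2.1} is applicable and yields
\begin{equation*}
Q(u) \geq \frac{64\pi-1}{64\pi}A(u) + \frac{C}{4}\,\frac{C(u)^{1/(4-p)}}{A(u)^{3(p-3)/(2(4-p))}\,c^{(p-3)/(2(4-p))}} - \frac{3(p-2)}{2p}C(u),
\end{equation*}
for some $C>0$ depending only on $p$. It then suffices to bound the sum of the last two terms from below by $-C'\,A(u)^{3/2}c^{1/2}$ with $C'>0$ depending only on $p$.

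Regarding $A(u)$ and $c$ as frozen parameters, the relevant sum is a function of $Y := C(u)\geq 0$ of the form $\alpha Y^{k}-\beta Y$, where $k = 1/(4-p)$, $\beta = 3(p-2)/(2p)$, and $\alpha>0$ absorbs the $A(u)$- and $c$-dependent prefactor. Since $p \in (3,\tfrac{10}{3})$ forces $k>1$, this function is bounded below on $[0,\infty)$: elementary calculus gives an interior minimum at $Y^{\ast} = (\beta/(k\alpha))^{1/(k-1)}$ with value $-\tfrac{k-1}{k}\beta\, Y^{\ast}$.

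To finish, I would compute the scaling of $Y^{\ast}$ in $A(u)$ and $c$. The identity $k-1 = (p-3)/(4-p)$, equivalently $(p-3)k/(k-1)=1$, causes the exponents in $\alpha^{-1/(k-1)}$ to collapse to exactly $A(u)^{3/2}c^{1/2}$, so that the minimum value of $\alpha Y^{k} - \beta Y$ is $-C'\,A(u)^{3/2}c^{1/2}$ for some $C'>0$ depending only on $p$. Combined with the estimate above, this gives the claimed bound. The only conceivable obstacle is keeping track of the exponents through the calculus step, but this is purely mechanical: the identity $(p-3)k/(k-1)=1$, which is implicit in the derivation of Lemma \ref{lm2.1} via interpolation between the $L^3$-Coulomb bound \eqref{2.3} and the $L^4$-Gagliardo--Nirenberg bound \eqref{2.4}, is precisely what makes the cancellation exact.
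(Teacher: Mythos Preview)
Your proof is correct and follows essentially the same approach as the paper: both apply Lemma~\ref{lm2.1} to obtain the intermediate estimate \eqref{raaa}, then minimize the resulting expression over $C(u)$ via elementary one-variable calculus, with the exponent identity $k-1=(p-3)/(4-p)$ producing exactly the $A(u)^{3/2}c^{1/2}$ scaling. The only difference is notational---the paper writes the auxiliary function as $f_K(x)$ with parameter $D$, while you use $\alpha Y^k-\beta Y$---but the computation and the underlying idea are identical.
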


\begin{proof} By Lemma \ref{lm2.1} there exists a constant $C>0$ depending only on $p$, such that, for any $u \in S(c)$,
\begin{equation}\label{raaa}
Q(u)\geq\frac{64\pi-1}{64\pi}A(u)+C\cdot \frac{ C(u)^{\frac{1}{4-p}} }{A(u)^{\frac{3(p-3)}{2(4-p)}}\cdot D(u)^{\frac{p-3}{2(4-p)}}} - \frac{3(p-2)}{2p}C(u).
\end{equation}
To obtain (\ref{lm_q.1}) from (\ref{raaa}) we introduce the auxiliary function
$$f_K(x)=\left (\frac{64\pi-1}{64\pi} \right )K+ D\cdot x^{\frac{1}{4-p}}-\frac{3(p-2)}{2p}\cdot x, \quad x>0$$
with $D=C\cdot\left (  K^{\frac{3(p-3)}{2(4-p)}}\cdot c^{\frac{p-3}{2(4-p)}} \right )^{-1}$. Its study will provide us an estimate independent of $C(u)$. Clearly
\begin{eqnarray*}
f_K'(x)&=&D\cdot \frac{1}{4-p}\cdot x^{\frac{p-3}{4-p}}-\frac{3(p-2)}{2p}, \\
f_K''(x)&=&D\cdot \frac{1}{4-p}\cdot \frac{p-3}{4-p}\cdot x^{\frac{p-3}{4-p}-1}>0, \quad \mbox{ for all }  x>0.
\end{eqnarray*}
Therefore $f_K(x)$ has the unique global minimum at
$$\bar{x}=\left ( \frac{3(p-2)(4-p)}{2p D} \right )^{\frac{4-p}{p-3}},$$
and
\begin{eqnarray*}
f_K(\bar{x})&=& \frac{64\pi-1}{64\pi}K+ D\cdot \left ( \frac{3(p-2)(4-p)}{2p D} \right )^{\frac{1}{p-3}}-\frac{3(p-2)}{2p}\cdot \left ( \frac{3(p-2)(4-p)}{2p D} \right )^{\frac{4-p}{p-3}} \\
&=& \frac{64\pi-1}{64\pi}K-\left ( \frac{3(p-2)(4-p)}{2p}  \right )^{\frac{1}{p-3}}\cdot   \frac{p-3}{4-p}\cdot  D^{\frac{p-4}{p-3}}\\
&=&\frac{64\pi-1}{64\pi}K-\left ( \frac{3(p-2)(4-p)}{2p}  \right )^{\frac{1}{p-3}}\cdot   \frac{p-3}{4-p}\cdot C^{\frac{p-4}{p-3}}\cdot  K^{\frac{3}{2}}\cdot c^{\frac{1}{2}}.
\end{eqnarray*}
Thus $f_K(x) \geq f_K(\bar{x})$ for all $x >0$. This, together with (\ref{raaa}) implies \eqref{lm_q.1}.
\end{proof}

Finally we recall the following results obtained in \cite{BS1,BS}.

\begin{lem}\label{lm2.2}
Let $ p \in (3, \frac{10}{3})$, then
\begin{itemize}
\item[(i)] For any $c>0$ such that $m(c) <0$, $m(c)$ admits a minimizer.
\item[(ii)] There exists $d>0$, such that for all $c\in (d,\infty)$, $m(c)<0$.
\item[(iii)] The function $c \mapsto m(c)$ is continuous at each $c>0$.
\end{itemize}
\end{lem}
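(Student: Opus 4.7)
I would prove the three parts separately, each with a different technique. Part (i) is a concentration-compactness argument, part (ii) is a direct test-function construction, and part (iii) is a scaling comparison between $S(c)$ and $S(c')$. A preparatory observation is that for $p \in (3,\tfrac{10}{3})$, the subcritical Gagliardo--Nirenberg inequality $\|u\|_p^p \leq K \|\nabla u\|_2^{3(p-2)/2} \|u\|_2^{(6-p)/2}$ has gradient exponent $3(p-2)/2 < 2$, so Young's inequality yields, on $S(c)$, the bound $\tfrac{1}{p}C(u) \leq \varepsilon A(u) + C_{\varepsilon,c}$ for any $\varepsilon > 0$, with a constant depending continuously on $c$.

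For (i), take a minimizing sequence $\{u_n\} \subset S(c)$. Since $B(u_n) \geq 0$, the bound above yields $F(u_n) \geq (\tfrac{1}{2}-\varepsilon)A(u_n) - C$, so $\{u_n\}$ is bounded in $H^1(\R^3)$. I would then apply Lions' concentration-compactness principle to $\{|u_n|^2\}$ and exclude the non-compact alternatives. Vanishing cannot occur, for Lions' lemma would then force $C(u_n) \to 0$ and hence $\liminf F(u_n) \geq 0 > m(c)$. The main obstacle is dichotomy; I would rule it out using the strict subadditivity $m(c) < m(\alpha) + m(c-\alpha)$ for all $\alpha \in (0,c)$, combined with a Brezis--Lieb type decomposition of $F(u_n)$ (the nonlocal interaction $\int\!\!\int |v(x)|^2 |w(y)|^2 / |x-y|\, dxdy$ between two bumps separated by distance $R$ decays like $R^{-1}$, so $B(v_n + w_n) = B(v_n) + B(w_n) + o(1)$ when $v_n, w_n$ are supported far apart). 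The strict subadditivity is the most delicate ingredient; I would obtain it, as in \cite{BS1, BS}, from a scaling analysis exploiting the different homogeneities of the kinetic, Coulomb, and $L^p$ terms in $F$.

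For (ii), fix $\phi \in H^1(\R^3)$ with $\|\phi\|_2^2 = 1$ and set $v_c(x) = c^{2}\phi(cx)$. Then $\|v_c\|_2^2 = c$ and a direct rescaling gives
$$F(v_c) = \frac{c^3}{2} A(\phi) + \frac{c^3}{4} B(\phi) - \frac{c^{2p-3}}{p} C(\phi).$$
Since $p > 3$ implies $2p - 3 > 3$, the negative term dominates as $c \to \infty$, so $m(c) \leq F(v_c) \to -\infty$ and (ii) follows.

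For (iii), given $c_n \to c > 0$, pick $u_n \in S(c_n)$ with $F(u_n) \leq m(c_n) + 1/n$. The Gagliardo--Nirenberg estimate above, uniform for $c_n$ in a compact neighborhood of $c$, makes $\{u_n\}$ bounded in $H^1$. Setting $\tilde u_n = (c/c_n)^{1/2} u_n \in S(c)$, the identities $A(\tilde u_n) = (c/c_n) A(u_n)$, $B(\tilde u_n) = (c/c_n)^2 B(u_n)$, $C(\tilde u_n) = (c/c_n)^{p/2} C(u_n)$ show $F(\tilde u_n) - F(u_n) \to 0$, hence $m(c) \leq \liminf m(c_n)$. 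The reverse inequality $\limsup m(c_n) \leq m(c)$ follows by the symmetric rescaling of a near-minimizer of $m(c)$ into $S(c_n)$.
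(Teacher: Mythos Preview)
The paper does not actually prove this lemma: it simply recalls that (i) and (ii) are established in \cite{BS1} and that (iii) follows by inspecting the proof in \cite{BS}. So there is no argument in the paper to compare against, only citations.

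Your outline is a correct sketch of how those references proceed. For (ii), the scaling $v_c(x)=c^2\phi(cx)$ you use is exactly the scaling $(u)_t(x)=t^2u(tx)$ that the paper itself employs later (Lemma~\ref{lm3.2}, Lemma~\ref{lm3.5}), and your computation of the exponents is right. For (iii), your rescaling argument $\tilde u_n=(c/c_n)^{1/2}u_n$ is precisely the device the paper uses in the proof of Theorem~\ref{th4.3}~(ii) for the analogous quasilinear problem; note that to conclude $F(\tilde u_n)-F(u_n)\to 0$ you also need $B(u_n)$ bounded, which follows once $A(u_n)$ and $C(u_n)$ are (from $F(u_n)\le 1/n$). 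For (i), your description is the standard concentration--compactness scheme; the only substantive step is the strict subadditivity, which you correctly flag as delicate and defer to \cite{BS1,BS}. In the present paper the authors do not need the full strict subadditivity $m(c)<m(\alpha)+m(c-\alpha)$ for every $\alpha$: the weaker scaling inequality $m(tc)<tm(c)$ for $t>1$ whenever $m(c)\le 0$ is achieved (their Lemma~\ref{lm3.2}) already suffices to run the dichotomy exclusion, and is easier to prove.
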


\begin{remark}
Points (i) and (ii) of Lemma \ref{lm2.2} are proved in \cite{BS1}. Concerning Point (iii), in  \cite{BS} the authors prove the continuity of $m(c)$ about $c>0$ when $p \in (2,3)$. However inspecting their proof reveals that it also holds for $ p \in [3, \frac{10}{3})$.
\end{remark}

\section{Proofs of the main results}

We first give the following non-existence result.
\begin{lem}\label{lm3.1}
When $p \in (3,\frac{10}{3})$, there exists a $c_3>0$, such that $m(c)$ has no minimizer for all $c\in (0,c_3)$.
\end{lem}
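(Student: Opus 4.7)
The plan is to argue by contradiction: assume that for some $c>0$ a minimizer $u_0\in S(c)$ of $m(c)$ exists, derive a lower bound and an upper bound on $A(u_0)$ in terms of $c$, and show these two bounds are incompatible when $c$ is sufficiently small.

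\emph{Step 1: a forced lower bound on $A(u_0)$.} Since $u_0$ is, in particular, a critical point of $F$ restricted to $S(c)$, Lemma \ref{lm2.3} gives $Q(u_0)=0$. Plugging this into the estimate of Lemma \ref{lm_q}, I obtain
$$0 = Q(u_0) \;\geq\; \frac{64\pi-1}{64\pi}\,A(u_0) - C\,A(u_0)^{3/2}\,c^{1/2}.$$
Because $u_0\in S(c)$ with $c>0$, we have $A(u_0)>0$ (otherwise $u_0\equiv 0$), so I can divide by $A(u_0)$ and rearrange to get a bound of the form
$$A(u_0) \;\geq\; \frac{K_1}{c}$$
for a constant $K_1>0$ depending only on $p$.

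\emph{Step 2: an upper bound on $A(u_0)$.} The identity $Q(u_0)=0$ also gives
$$A(u_0) \;\leq\; A(u_0) + \tfrac{1}{4}B(u_0) \;=\; \frac{3(p-2)}{2p}\,C(u_0).$$
The Gagliardo--Nirenberg inequality, with the sharp exponents for $p\in(2,6)$, yields $C(u_0)\leq K\,A(u_0)^{3(p-2)/4}\,c^{(6-p)/4}$. Substituting and rearranging (the exponent $1-3(p-2)/4 = (10-3p)/4$ is strictly positive since $p<\tfrac{10}{3}$), I arrive at an estimate of the form
$$A(u_0) \;\leq\; K_2\, c^{(6-p)/(10-3p)}.$$

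\emph{Step 3: combining the two bounds.} Juxtaposing the bounds of Steps 1 and 2 gives
$$\frac{K_1}{c} \;\leq\; K_2\, c^{(6-p)/(10-3p)}, \qquad \text{i.e.}\qquad c^{(16-4p)/(10-3p)} \;\geq\; \frac{K_1}{K_2}.$$
For $p\in(3,\tfrac{10}{3})$ both $16-4p$ and $10-3p$ are positive, so the left-hand exponent is positive. Hence any such $c$ must satisfy $c\geq c_3$ for some threshold $c_3>0$ depending only on $p$, and no minimizer can exist when $c\in(0,c_3)$.

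The only real subtlety is Step 1: extracting a useful lower bound on $A(u_0)$ requires the precise form of Lemma \ref{lm_q}, whose $A(u_0)$-coefficient is strictly positive. Once this is in hand, Steps 2 and 3 are elementary applications of Gagliardo--Nirenberg and algebra.
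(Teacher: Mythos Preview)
Your proof is correct and follows essentially the same strategy as the paper: combine the constraint $Q(u_0)=0$ with Lemma~\ref{lm_q} on one side and with Gagliardo--Nirenberg on the other to force incompatible bounds on $A(u_0)$ for small $c$.

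There is one minor but noteworthy difference. In Step~2 you obtain the upper bound on $A(u_0)$ from $Q(u_0)=0$ itself, whereas the paper's proof of Lemma~\ref{lm3.1} uses instead the minimizer inequality $F(u_0)=m(c)\le 0$ (together with Gagliardo--Nirenberg) to bound $A(u_0)$ from above. Since your argument uses only $Q(u_0)=0$ and never the inequality $F(u_0)\le 0$, it in fact applies to \emph{any} constrained critical point, not just minimizers; so what you have written is really a proof of the stronger Theorem~\ref{th1.2} in the range $p\in(3,\tfrac{10}{3})$. This is exactly the argument the paper gives later for Theorem~\ref{th1.2}, and it is a perfectly legitimate (indeed slightly more efficient) way to obtain Lemma~\ref{lm3.1} as a byproduct.
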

\begin{proof} Let us assume by contradiction that there exist sequences $\{c_n\} \subset \R^+$, with $c_n \to 0$ as $n \to \infty$, and $\{u_n\} \subset S(c_n)$ such that $F(u_n)=m(c_n)$. Then by Lemma \ref{lm2.3}, $Q(u_n)=0$ for any $n \in \mathbb{N}^+$. \smallskip

Since $m(c)\leq 0$ for any $c>0$, see Remark \ref{rem1}, we know that $F(u_n)\leq 0$. Thus
\begin{eqnarray}\label{3.1}
\frac{1}{2}A(u_n)+\frac{1}{4}B(u_n) &\leq& \frac{1}{p}C(u_n)   \nonumber \\
&\leq& \frac{C}{p}A(u_n)^{\frac{3}{4}(p-2)}\cdot D(u_n)^{\frac{6-p}{4}},
\end{eqnarray}
by Gagliardo-Nirenberg's inequality. Since $ p \in (3, \frac{10}{3})$, $1> \frac{3}{4}(p-2)$ and thus (\ref{3.1}) implies that
\begin{eqnarray}\label{3.2}
A(u_n)\rightarrow 0, \ \mbox{as} \ n \to \infty.
\end{eqnarray}
Now due to (\ref{3.2}) and  Lemma \ref{lm_q}, when $n \in \mathbb{N}^+$ is sufficiently large,
\begin{eqnarray*}
Q(u_n)&\geq& \frac{64\pi-1}{64\pi}A(u_n)-C \cdot A(u_n)^{\frac{3}{2}}\cdot c_n^{\frac{1}{2}} \\
&\geq& \frac{64\pi-1}{64\pi}A(u_n)-C \cdot A(u_n)^{\frac{3}{2}}>0.
\end{eqnarray*}
Obviously this contradicts Lemma \ref{lm2.3} and this ends the proof.
\end{proof}

The following lemma is crucial to establish a precise threshold between existence and non-existence.
\begin{lem}\label{lm3.2}
Assume that $p \in (3, \frac{10}{3})$ holds. For any  $c>0$ such that $m(c)<0$ or such that $m(c)=0$ and $m(c)$ has a minimizer we have
\begin{eqnarray*}
m(t c)< t m(c), \ \mbox{for all} \ t>1.
\end{eqnarray*}
\end{lem}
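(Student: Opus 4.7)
My approach combines an algebraic extraction of a structural inequality from the variational identities, a dilation scaling, and an iterative bootstrap. In both cases of the hypothesis, Lemma~\ref{lm2.2}(i) produces a minimizer $u_c$ of $m(c)$, which by Lemma~\ref{lm2.3} satisfies $Q(u_c)=0$, equivalently $C(u_c)=\tfrac{2p}{3(p-2)}(A(u_c)+B(u_c)/4)$. Substituting this into $F(u_c)\leq 0$ and simplifying yields $B(u_c)\leq \tfrac{2(10-3p)}{3p-8}\,A(u_c)$; since $\tfrac{2(10-3p)}{3p-8}<2$ precisely for $p>3$, this gives the strict structural inequality $B(u_c)<2A(u_c)$.

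To exploit this, I take the dilation $v(x):=u_c(x/t^{1/3})\in S(tc)$, for which a direct computation of the three scaling factors gives
$$F(v)-tF(u_c)=\frac{t^{1/3}(t^{2/3}-1)}{4}\bigl[t^{2/3}B(u_c)-2A(u_c)\bigr].$$
For $t>1$ the prefactor is positive, and the bracket is strictly negative exactly when $t<(2A(u_c)/B(u_c))^{3/2}$; hence $m(tc)\leq F(v)<tm(c)$ on the nontrivial interval $\bigl(1,(2A(u_c)/B(u_c))^{3/2}\bigr)$.

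I then extend the strict inequality to all $t>1$ by iteration. The first-step estimate is uniform: for any $c'>0$ admitting a minimizer $u_{c'}$ with $F(u_{c'})\leq 0$, one has $\tau(c'):=(2A(u_{c'})/B(u_{c'}))^{3/2}\geq \Gamma_p:=((3p-8)/(10-3p))^{3/2}$, and $\Gamma_p>1$ for $p\in(3,\tfrac{10}{3})$. Suppose inductively that $m(tc)<tm(c)$ on $(1,T_n)$ with $T_n>1$, and pick $t_n\in(T_n/\sqrt{\Gamma_p},T_n)$. Then $m(t_nc)<t_nm(c)\leq 0$, so in particular $m(t_nc)<0$ and Lemma~\ref{lm2.2}(i) yields a minimizer of $m(t_nc)$. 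Applying the dilation construction at $t_nc$ and using transitivity gives $m(st_nc)<sm(t_nc)<st_nm(c)$ for $s\in(1,\Gamma_p)$, i.e.\ strict inequality $m(tc)<tm(c)$ on $(t_n,t_n\Gamma_p)$; combining with the inductive hypothesis this extends the region to $(1,T_{n+1})$ with $T_{n+1}=t_n\Gamma_p>T_n\sqrt{\Gamma_p}$, so $T_n\to\infty$ and the result holds on $(1,\infty)$. The principal obstacle is the first step: the bound $B<2A$ degenerates at both endpoints $p=3$ and $p=10/3$, consistent with the lemma failing in the stated form there.
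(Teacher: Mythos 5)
Your proof is correct, but it takes a genuinely different route from the paper's. The paper uses the single scaling $(u_c)_t(x)=t^2u_c(tx)$, under which $\|\cdot\|_2^2$ is multiplied by $t$ while $A$ and $B$ are \emph{both} multiplied by $t^3$ and $C$ by $t^{2p-3}>t^3$; since $C(u_c)>0$ this gives $m(tc)\le F((u_c)_t)<t^3F(u_c)=t^3m(c)\le tm(c)$ for \emph{all} $t>1$ in one stroke, using nothing beyond the existence of a minimizer with $F(u_c)=m(c)\le 0$. Your $L^\infty$-preserving dilation $u_c(x/t^{1/3})$ instead makes $C$ scale exactly like $t$ (so it cancels in $F(v)-tF(u_c)$) but makes $B$ scale like $t^{5/3}$, which is why you must pay for it with the structural bound $B(u_c)\le \frac{2(10-3p)}{3p-8}A(u_c)$ extracted from $Q(u_c)=0$ (Lemma~\ref{lm2.3}) together with $F(u_c)\le 0$ — this is exactly the combination $F-\frac{2}{3(p-2)}Q$ that the paper records as (\ref{3.12.1}) and uses only for $p=\frac{10}{3}$ — and then with the bootstrap. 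I checked the algebra: the identity for $F(v)-tF(u_c)$, the bound $\tau(c')\ge\Gamma_p>1$ uniform over all admissible $c'$, and the induction (note $T_n\ge\Gamma_p$ throughout, so $t_n>1$ is automatic and Lemma~\ref{lm2.2}(i) applies at each stage since $m(t_nc)<t_nm(c)\le 0$) all go through. The trade-off: the paper's argument is shorter and needs no Euler--Lagrange/Pohozaev input, while yours isolates the quantitative inequality $B<2A$ satisfied by any nonpositive-energy critical point, and correctly identifies why the mechanism degenerates at both endpoints $p=3$ and $p=\frac{10}{3}$. Only a cosmetic remark: in the case $m(c)=0$ the minimizer is supplied by the hypothesis, not by Lemma~\ref{lm2.2}(i), which only covers $m(c)<0$.
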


\begin{proof}
By Lemma \ref{lm2.2} (i) without restriction we can assume that $m(c) \leq 0$ admit a minimizer $u_c\in S(c)$. We set $(u_c)_t(x)=t^2u_c(tx)$ for $t>1$. Then $D((u_c)_t)=tD(u_c)=tc$, and since $2p-6>0$ in case of $p\in (3, 10/3]$ and $C(u_c)>0$, we obtain
\begin{eqnarray}\label{lm3.2_1}
m(t c)\leq F((u_c)_t)&=& t^3\cdot \left( \frac{1}{2}A(u_c)+\frac{1}{4}B(u_c)-\frac{t^{2p-6}}{p}C(u_c)  \right ) \nonumber \\
&<&t^3\cdot \left( \frac{1}{2}A(u_c)+\frac{1}{4}B(u_c)-\frac{1}{p}C(u_c)  \right )  \\
&=&t^3\cdot F(u_c)=t^3 m(c).\nonumber
\end{eqnarray}
Since $m(c) \leq 0$ and $t>1$, we conclude from (\ref{lm3.2_1}) that $m(t c)< t^3 m(c) \leq t m(c)$.
\end{proof}

In the case $p=\frac{10}{3}$ we first have
\begin{lem}\label{lm3.5}
When $p=\frac{10}{3}$, we have $c_2 \in (0, \infty)$, where $c_2$ is given by (\ref{1.1'}).
\end{lem}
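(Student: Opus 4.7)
The plan is to prove the two inequalities $c_2 > 0$ and $c_2 < \infty$ separately, both exploiting the fact that $p = 10/3$ is the $L^2$-critical Sobolev exponent in $\R^3$ for the pure power term.

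For the lower bound $c_2 > 0$, I would apply the Gagliardo--Nirenberg inequality at this critical exponent, which provides a universal constant $K > 0$ with
$$C(u) = \|u\|_{10/3}^{10/3} \leq K\,A(u)\,\|u\|_2^{4/3} = K\,c^{2/3}\,A(u) \qquad \text{for every } u \in S(c).$$
Combined with $B(u) \geq 0$, this immediately yields
$$F(u) \geq \Bigl(\tfrac{1}{2} - \tfrac{K}{p}\,c^{2/3}\Bigr) A(u).$$
When $c$ is smaller than the explicit threshold $(p/(2K))^{3/2}$, the prefactor is strictly positive. Because any $u \in S(c)$ has $A(u) > 0$ (a constant function in $L^2(\R^3)$ must vanish, so $A(u)=0$ would force $u\equiv 0$, contradicting $c>0$), I conclude $F(u) > 0$ on all of $S(c)$. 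Hence no element of $S(c)$ satisfies $F(u) \leq 0$ for such $c$, and $c_2$ is bounded below by this threshold.

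For the upper bound $c_2 < \infty$, I would use the $L^2$-preserving scaling $u \mapsto u^t(x) := t^{3/2} u(tx)$ from Remark \ref{rem1}. Under this scaling the three terms of $F$ pick up weights $t^2$, $t$, and $t^{3(p-2)/2}$; at $p = 10/3$ the last exponent equals $2$, so Dirichlet and nonlinear terms scale identically while the nonlocal term scales more slowly:
$$F(u^t) = t^2\Bigl(\tfrac{A(u)}{2} - \tfrac{C(u)}{p}\Bigr) + \tfrac{t}{4}\,B(u).$$
The strategy is then to choose $u$ so that the coefficient of $t^2$ is strictly negative, whereupon $F(u^t) \to -\infty$ as $t \to \infty$. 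Concretely, I fix any $\phi \in H^1(\R^3)\setminus\{0\}$ with $\|\phi\|_2^2 = 1$ and set $v_c := \sqrt{c}\,\phi \in S(c)$; then
$$\tfrac{A(v_c)}{2} - \tfrac{C(v_c)}{p} = \tfrac{c\,A(\phi)}{2} - \tfrac{c^{5/3} C(\phi)}{p},$$
which is strictly negative once $c > (p\,A(\phi)/(2C(\phi)))^{3/2}$. For such $c$ I can pick $t_0 > 0$ with $F((v_c)^{t_0}) \leq 0$, and since $(v_c)^{t_0} \in S(c)$ this places $c$ in the set defining $c_2$, giving $c_2 \leq c < \infty$.

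I do not expect a substantial obstacle: both halves reduce to the scaling arithmetic between the three constituent functionals of $F$, with the sharp $L^2$-critical Gagliardo--Nirenberg inequality supplying the quantitative control in the small-mass regime and a one-parameter test-function construction doing the work in the large-mass regime.
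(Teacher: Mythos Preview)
Your proof is correct and follows essentially the same approach as the paper. The lower bound $c_2>0$ is identical (Gagliardo--Nirenberg at the $L^2$-critical exponent together with $B(u)\geq 0$). For the upper bound $c_2<\infty$ the paper uses the single scaling $u_t(x)=t^2u_1(tx)$, which sends $S(1)$ to $S(t)$ and gives $F(u_t)=t^3\bigl(\tfrac{1}{2}A(u_1)+\tfrac{1}{4}B(u_1)-\tfrac{3}{10}t^{2/3}C(u_1)\bigr)<0$ for large $t$; your two-step construction (first $\phi\mapsto\sqrt{c}\,\phi$ to reach $S(c)$, then the $L^2$-preserving scaling to drive $F$ negative) is a slightly less direct variant of the same scaling idea and works just as well.
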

\begin{proof}
First observe that by Gagliardo-Nirenberg's inequality, when $p=\frac{10}{3}$ we have
\begin{eqnarray}\label{lm3.5.1}
C(u)\leq C\cdot A(u)\cdot c^{\frac{2}{3}},\quad \mbox{ for all } u\in S(c),
\end{eqnarray}
where $ C>0$ independent of $c>0$. Thus for any $u\in S(c)$, there holds
\begin{eqnarray}\label{lm3.5.2}
F(u)&\geq& \frac{1}{2}A(u)+\frac{1}{4}B(u)-\frac{3}{10} C\cdot A(u)\cdot c^{\frac{2}{3}} \nonumber \\
&\geq& A(u)\left ( \frac{1}{2}- \frac{3}{10} C \cdot c^{\frac{2}{3}} \right ).
\end{eqnarray}
Thus $F(u)>0$, for all $ u\in S(c)$ if $c>0$ is sufficiently small and it proves that $c_2>0$.

Now take $u_1\in S(1)$ arbitrary and consider the scaling
\begin{eqnarray}\label{lm3.5.3}
u_t(x)=t^2 u_1(tx), \quad  \mbox{ for all } t>0.
\end{eqnarray}
Then $u_t\in S(t)$ and
\begin{eqnarray}\label{lm3.5.4}
F(u_t)&=&\frac{t^3}{2}A(u_1)+\frac{t^3}{4}B(u_1)-\frac{3}{10}t^{\frac{11}{3}}C(u_1) \nonumber \\
&=&t^3 \left(\frac{1}{2}A(u_1)+\frac{1}{4}B(u_1)-\frac{3}{10}t^{\frac{2}{3}}C(u_1) \right).
\end{eqnarray}
This shows that $F(u_t)<0$ for $t>0$ large enough and proves that $c_2<\infty$.
\end{proof}

We can now give the

\begin{proof}[Proof of Theorem \ref{lm1.1}]
First we prove that $c_1>0$ by contradiction. If we assume that $c_1=0$ then, from the definition of $c_1$, $m(c)<0$ for all $c>0$. Thus Lemma \ref{lm2.2} (i) implies the existence of a minimizer for any $c>0$ and this contradicts Lemma \ref{lm3.1}. Additionally Lemma \ref{lm2.2} (ii) shows that $c_1 <\infty $, thus Point (i) follows. To prove Point (ii) we observe that since $m(c) \leq 0$ for all $c>0$, from the definition of $c_1 >0$ it follows that $m(c) = 0$ if $c \in (0, c_1)$. Using the continuity of $c \mapsto m(c)$, see Lemma \ref{lm2.2} (iii), we obtain that $m(c_1)=0$ and then Point (ii) holds. Point (iii) is a direct consequence of Lemma \ref{lm3.2} and of the definition of $c_1>0$.

Concerning Point (iv), it is enough to show that if $p=3$, for any $c>0$ one has
\begin{eqnarray}\label{3.11.0}
F(u)>0,\quad \mbox{ for all } u\in S(c).
\end{eqnarray}
Indeed, since $m(c)\leq 0$ for all $c>0$,  (\ref{3.11.0}) implies immediately Point (iv). To check (\ref{3.11.0}), we use (\ref{kikuchi}) with $\eta = 4/3$.  From (\ref{2.2.2}) and (\ref{kikuchi}) we then get
$$\frac{1}{4}\int_{\R^3}\int_{\R^3}\frac{|u(x)|^2|u(y)|^2}{|x-y|}dx dy \geq - \frac{1}{36 \pi} ||\nabla u||_2^2 + \frac{1}{3}||u||^3_3.$$
Thus when $p=3$, for any $u\in S(c)$,
\begin{eqnarray*}
F(u)\geq \frac{1}{2}||\nabla u||_2^2-\frac{1}{36 \pi} ||\nabla u||_2^2 >0
\end{eqnarray*}
and (\ref{3.11.0}) holds.

Finally since, by Lemma \ref{lm3.5}, $c_2 \in (0,\infty)$, to prove Point (v) it is enough to verify (\ref{1.1''}). From the definition of $c_2$, it follows directly that $m(c)=0$ for any $c \in (0, c_2)$. Now if $c \in (c_2, \infty)$, we first claim that there exists a $v\in S(c)$ such that $F(v)\leq 0$. Indeed if we assume that
$F(u)>0$ for all $ u\in S(c)$ we reach a contradiction as follows. For an arbitrary $\hat{c}\in [c_2, c ) $ taking any $u\in S(\hat{c})$ we scale it as in (\ref{lm3.5.3}) where $t=c/ \hat{c}$. Then $u_t\in S(c)$ and it follows from (\ref{lm3.5.4}) that
$F(u_t)\leq t^3F(u).$ This implies that $F(u)>0$ for all $u\in S(\hat{c})$ and since $\hat{c}\in [c_2, c ) $ is arbitrary this contradicts the definition of $c_2 > 0$.
Hence, for any $c \in (c_2, \infty)$, there exists a $u_0\in S(c)$ such that $F(u_0)\leq 0$.

Consider now the scaling
\begin{eqnarray}\label{3.11.1}
u^{\theta}(x)= \theta ^{\frac{3}{2}}u_0(\theta x), \quad \mbox{ for all }  \theta >0.
\end{eqnarray}
We have $u^{\theta} \in S(c)$ for all $\theta >0$ and
\begin{eqnarray}\label{3.11.2}
F(u^{\theta}) &=& \frac{\theta ^2}{2}A(u_0)+ \frac{\theta}{4}B(u_0)-\frac{10}{3}\theta ^2 C(u_0) \nonumber \\
&=& \frac{\theta}{4}B(u_0)- \left(\frac{10}{3}C(u_0)- \frac{1}{2}A(u_0) \right)\cdot \theta ^2.
\end{eqnarray}
Since $F(u_0)\leq 0$, necessarily
$$\frac{10}{3} C(u_0)- \frac{1}{2}A(u_0)>0.$$
Thus we see from (\ref{3.11.2}) that $\lim_{\theta \to \infty}F(u^{\theta})=-\infty$ and $m(c)=-\infty$ follows. At this point the proof of the theorem is completed.
\end{proof}

Before giving the proof of Theorem \ref{th1.1} we consider the case where $c= c_1$ that requires a special treatment.

\begin{lem}\label{lm5.1}
Assume that $p \in (3, \frac{10}{3})$ holds. Then $m(c_1)$ admits a minimizer.
\end{lem}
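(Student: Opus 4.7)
The plan is to obtain a minimizer as the weak limit of a sequence of minimizers at levels $c_n \downarrow c_1$. By Theorem \ref{lm1.1}(iii), for any $c_n > c_1$ we have $m(c_n) < 0$, so Lemma \ref{lm2.2}(i) yields minimizers $u_n \in S(c_n)$ with $F(u_n) = m(c_n) \to 0 = m(c_1)$. Since each $u_n$ is a constrained critical point, Lemma \ref{lm2.3} gives $Q(u_n) = 0$.

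First I would establish boundedness and non-triviality. The identity $Q(u_n) = 0$ combined with Gagliardo--Nirenberg (noting $3(p-2)/4 < 1$ when $p < 10/3$) yields an upper bound on $A(u_n)$, while Lemma \ref{lm_q} applied to $Q(u_n) = 0$ provides a lower bound $A(u_n) \geq A_0 > 0$. The inequality $F(u_n) < 0$ then forces $\tfrac{1}{p}C(u_n) > \tfrac{1}{2}A(u_n) \geq \tfrac{1}{2}A_0$, so $\|u_n\|_p$ stays away from zero; by Lions' vanishing lemma, after translating we may assume $u_n \rightharpoonup u_0$ weakly in $H^1(\R^3)$ with $u_0 \not\equiv 0$. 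Set $c' := \|u_0\|_2^2 \in (0, c_1]$.

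Now I would argue by cases on $c'$. If $c' = c_1$, then $u_n \to u_0$ strongly in $L^2$, and interpolation with the $H^1$ bound yields strong convergence in $L^p$ and in $L^{12/5}$, so $C(u_n) \to C(u_0)$ and $B(u_n) \to B(u_0)$ (by Hardy--Littlewood--Sobolev); weak lower semicontinuity of $A$ then gives $F(u_0) \leq \liminf F(u_n) = 0$, and since $u_0 \in S(c_1)$ forces $F(u_0) \geq m(c_1) = 0$, we conclude $F(u_0) = m(c_1)$.

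The delicate case is $c' \in (0, c_1)$, which I plan to exclude using Lemma \ref{lm3.2}. Writing $v_n := u_n - u_0 \rightharpoonup 0$, the Brezis--Lieb lemma (applied component-wise to $A$, $C$, and to the nonlocal term $B$) gives $F(u_n) = F(u_0) + F(v_n) + o(1)$. Since $c' < c_1$, one has $F(u_0) \geq m(c') = 0$; since $\|v_n\|_2^2 \to c_1 - c' \in (0, c_1)$, continuity of $m$ (Lemma \ref{lm2.2}(iii)) yields $F(v_n) \geq m(\|v_n\|_2^2) \to 0$. Passing to the limit in $F(u_n) \to 0$ forces $F(u_0) = 0$, so $u_0$ is a minimizer of $m$ on $S(c')$ with $m(c') = 0$. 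Applying Lemma \ref{lm3.2} with $t = c_1/c' > 1$ gives $m(c_1) < t\, m(c') = 0$, contradicting $m(c_1) = 0$. The main obstacle is this last case: the scaling lemma is essential precisely because standard subadditivity is not available at the threshold level where $m$ vanishes.
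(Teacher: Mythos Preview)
Your proposal is correct and follows essentially the same approach as the paper: both take minimizers $u_n$ at levels $c_n \downarrow c_1$, establish $H^1$-boundedness and non-triviality of the weak limit (after translation), use the Brezis--Lieb splitting for $F$ together with continuity of $m$, and exclude the case $\|u_0\|_2^2 < c_1$ via Lemma~\ref{lm3.2}. The only minor variation is in the non-triviality step---you obtain a uniform lower bound on $A(u_n)$ directly from $Q(u_n)=0$ and Lemma~\ref{lm_q}, whereas the paper argues by contradiction that $C(u_n)\not\to 0$ using the analogous lower bound for $F$---but this is the same ingredient used in a slightly different order.
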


\begin{proof} Let $k_n:=c_1+ 1/n,$ for all  $n\in \N^+$. We have $k_n \to c_1$ and thus, by Lemma 2.4 (iii), $m(k_n) \to m(c_1)=0$. Furthermore, by Theorem \ref{lm1.1} (iii) and Lemma
\ref{lm2.2} (i) we know that for each $n\in \N^+$, $m(k_n)<0$ and $m(k_n)$ admits a minimizer $u_n$. Now we claim that the sequence $\{u_n\}$ is bounded in $H^1(\R^3)$. Indeed, by Gagliardo-Nirenberg's inequality, we have
\begin{eqnarray*}
\frac{1}{2}A(u_n)+\frac{1}{4}B(u_n)&=&\frac{1}{p}C(u_n)+F(u_n) \\
&\leq& CA(u_n)^{\frac{3(p-2)}{4}} k_n^{\frac{6-p}{4}} + m(k_n).
\end{eqnarray*}
This implies that $\{A(u_n)\}$ is bounded, since $m(k_n) \leq 0$ and  $1>3(p-2)/4$. Thus we conclude that $\{u_n\}$ is bounded in $H^1(\R^3)$.
%In particular, $\{u_n\}$ is bounded in $L^p(\R^3)$.

Now we claim that $C(u_n) \nrightarrow 0$. By contradiction let us assume that $C(u_n)\to 0$ as $n\to \infty$. Since $F(u_n)\to m(c_1)=0$ it then follows that
\begin{eqnarray}\label{5.1.1}
A(u_n)\to 0\ \mbox{ and }\ B(u_n)\to 0,\ \mbox{as }\ n\to \infty.
\end{eqnarray}
Now, similarly to the proof of Lemma \ref{lm_q}, using \eqref{2.1}, we can estimate  $F(u)$ from below by
\begin{eqnarray}\label{5.1.2}
F(u)\geq\frac{32\pi-1}{64\pi}A(u)-C\cdot A(u)^{\frac{3}{2}} \cdot c^{\frac{1}{2}}, \quad \mbox{ for all } u \in S(c)
\end{eqnarray}
where $C>0$ is constant, depending only on $p$. In particular
\begin{equation}\label{rajout3}
F(u_n)\geq  A(u_n) \left( \frac{32\pi-1}{64\pi}- C\cdot A(u_n)^{\frac{1}{2}} \cdot k_n^{\frac{1}{2}}   \right).
\end{equation}
Taking (\ref{5.1.1}) into account, (\ref{rajout3}) implies that $F(u_n) \geq 0$ for $n \in \N^+$ sufficiently large. This contradicts the fact that $F(u_n)=m(k_n)<0$ for all $n\in \mathbb{N}^+$ and proves the claim.

Now, by Lemma I.1 of \cite{L}, we deduce that $\{u_n\}$ does not vanish. Namely that there exists a constant $\delta >0$ and a  sequence $\{x_n\}\subset \mathbb{R}^3$ such that
$$\int_{B(x_n,1)}|u_n|^2 dx \geq \delta>0,$$
or equivalently
\begin{equation}\label{5.1.3}
\int_{B(0,1)}|u_n(\cdot + x_n)|^2 dx \geq \delta>0.
\end{equation}
Here $B(0,1)$ denotes the ball centered in 0 with radius $r=1$. Now let $v_n(\cdot)=u_n(\cdot + x_n)$. Clearly $\{v_n\}$ is bounded in $H^1(\R^3)$ and thus there exists $v_0\in H^1(\R^3)$ such that
$$ v_n \rightharpoonup  v_0 \ \mbox{ weakly in } H^1(\R^3) \quad \mbox{ and } \quad v_n \rightarrow  v_0 \ \ \mbox{ in } L_{loc}^2(\R^3).$$
%\begin{eqnarray*}
%v_n &\rightharpoonup & v_0 \ \mbox{ weakly in } H^1(\R^3), \\
%v_n&\rightarrow & v_0 \ \ \mbox{ in } L_{loc}^2(\R^3),\\
%\end{eqnarray*}
We note that $v_0 \neq 0$, since by \eqref{5.1.3}
$$0<\delta \leq \lim_{n\to \infty}\int_{B(0,1)}|v_n|^2dx = \int_{B(0,1)}|v_0|^2dx.$$
Let us prove that $v_0$ is a minimizer of $m(c_1)$. First we show that $F(v_0)=0$. Clearly
\begin{equation}\label{rajout4}
\lim_{n\to \infty}\|v_n\|_2^2=\|v_0\|_2^2+ \lim_{n\to \infty}\|v_n-v_0\|_2^2=c_1
\end{equation}
and using  Lemma \ref{lm2.2} (iii) we deduce from (\ref{rajout4}) that
\begin{equation}\label{rajout5}
\lim_{n \to \infty} F(v_n - v_0) \geq \lim_{n \to \infty} m(||v_n - v_0||_2^2) = m(c_1 - ||v_0||_2^2) = 0.
\end{equation}
Here we make the convention that $m(0)=0$. Now using Lemma 2.2 of \cite{ZZ}, we have
\begin{equation}\label{rajout6}
0 = m(c_1)= \lim_{n\to \infty}F(v_n)= F(v_0)+\lim_{n\to \infty}F(v_n-v_0).
\end{equation}
Since $||v_0||_2^2 \leq c_1$ we have $m(||v_0||_2^2)=0$ and it shows that $F(v_0) <0$ is impossible.  From (\ref{rajout5}) and (\ref{rajout6}) we deduce that $F(v_0) =0$ and that $v_0$ is a minimizer associated to $m(||v_0||_2^2)$. If we assume that $||v_0||_2^2 < c_1$ we get a contradiction with Lemma \ref{lm3.2} since $m(c_1)=0$. Thus necessarily $\|v_0\|_2^2 = c_1$ and this ends the proof.
\end{proof}

\begin{proof}[Proof of Theorem \ref{th1.1}]
To prove Point (i) we assume by contradiction that there exists $\widetilde{c} \in (0, c_1)$ such that $m(\widetilde{c})$ admits a minimizer. Then from the definition of $c_1 >0$ we get that $m(\widetilde{c}) =0$ and Lemma \ref{lm3.2} implies that $m(c) <0$ for any $c > \widetilde{c}$. This contradicts the definition of $c_1 >0$. Now when $c>c_1$ the result clearly follows from Theorem \ref{lm1.1} (iii) and Lemma \ref{lm2.2} (i). Finally the case $c=c_1$ is considered in Lemma \ref{lm5.1}. For Point (ii), first observe that, because of (\ref{3.11.0}), when $p=3$, for any $c>0$, $m(c)$ does not have a minimizer. Then we note that, from the definition of $Q(u)$, it holds, for any $u \in S(c)$,
\begin{eqnarray}\label{3.12.1}
F(u)-\frac{2}{3(p-2)}Q(u)=\frac{3p-10}{6(p-2)}A(u)+\frac{3p-8}{12(p-2)}B(u).
\end{eqnarray}
Taking $p=\frac{10}{3}$  in (\ref{3.12.1}) we obtain
\begin{eqnarray}\label{3.12.2}
F(u)-\frac{1}{2}Q(u)=\frac{1}{8}B(u).
\end{eqnarray}
Thus if we assume by contradiction that $m(c)$ has a minimizer $u_c \in S(c)$ for some $c >0$  we see from Lemma \ref{lm2.3} and (\ref{3.12.2}) that
$$ 0 \geq m(c) = F(u_c) = \frac{1}{8}B(u_c) >0.$$
This contradiction ends the proof of Point (ii) and of the theorem.
\end{proof}

\begin{proof}[Proof of Theorem \ref{th1.2}] We first consider the case $p\in (3,\frac{10}{3}]$ and we assume by contradiction that there exists sequences $\{c_n\} \subset \R^+$, with $c_n \to 0$, as $n \to \infty$, and $\{u_n\} \subset S(c_n)$ such that $u_n \in S(c_n)$ is a critical point of $F(u)$ restricted to $S(c_n)$. Then since
$$Q(u_n) = A(u_n) +\frac{1}{4}B(u_n)-\frac{3(p-2)}{2p}C(u_n)=0,$$
we deduce, from Gagliardo-Nirenberg's inequality, that for some $C>0,$
\begin{equation}\label{100}
A(u_n)\leq \frac{3(p-2)}{2p}C(u_n)\leq C \cdot A(u_n)^{\frac{3(p-2)}{4}}\cdot c_n^{\frac{6-p}{4}}.
\end{equation}
Thus there holds
$$
A(u_n)^{\frac{10-3p}{4}}\leq C \cdot c_n^{\frac{6-p}{4}}
$$
and we get that
\begin{eqnarray}\label{3.13}
A(u_n)\to 0\quad \mbox{as}\quad n\to \infty
\end{eqnarray}
if $p \in (3, \frac{10}{3})$ and directly a contradiction if $p = \frac{10}{3}$. Now when $ p \in (3, \frac{10}{3})$ by Lemma \ref{lm_q} we know, since $Q(u_n)=0$, that there exists a constant $C >0$ such that
\begin{eqnarray*}
\frac{64\pi-1}{64\pi}A(u_n) \leq C \cdot A(u_n)^{\frac{3}{2}} \cdot c_n^{\frac{1}{2}}
\end{eqnarray*}
or equivalently that
\begin{equation}\label{300}
\frac{64\pi-1}{64\pi} \leq C \cdot A(u_n)^{\frac{1}{2}} \cdot c_n^{\frac{1}{2}}.
\end{equation}
But (\ref{300}) implies that
$A(u_n)\to \infty$ as $n \to \infty$ and this contradicts (\ref{3.13}).

Now when $p=3$, it is enough to prove that, for any $c>0$, there holds
\begin{eqnarray}\label{3.15}
Q(u)>0,\quad \mbox{ for all } u\in S(c).
\end{eqnarray}
Indeed, if (\ref{3.15}) holds true,  we can conclude the non-existence of minimizers directly from Lemma \ref{lm2.3}. To check (\ref{3.15}), we use (\ref{kikuchi}) with $\eta = 2$. Then, from (\ref{2.2.2}) and (\ref{kikuchi}), we get
$$\frac{1}{4}\int_{\R^3}\int_{\R^3}\frac{|u(x)|^2|u(y)|^2}{|x-y|}dx dy \geq - \frac{1}{16 \pi} ||\nabla u||_2^2 + \frac{1}{2}||u||^3_3.$$
Thus, for any $u\in S(c)$,
\begin{eqnarray*}
Q(u)&=&||\nabla u||_2^2+ \frac{1}{4}\int_{\R^3}\int_{\R^3}\frac{|u(x)|^2|u(y)|^2}{|x-y|}dx dy-\frac{1}{2}||u||^3_3\\
&\geq& ||\nabla u||_2^2 - \frac{1}{16 \pi} ||\nabla u||_2^2>0.
\end{eqnarray*}
At this point the proof is completed.
\end{proof}

\section{On the quasilinear minimization problem}

In the proofs of Theorems \ref{th4.3} and \ref{th4.33} we only provide the parts which were not established or whose proofs in \cite{CJS} contains a gap. First we observe
\begin{lem}\label{lm4.1}
Assume that $p \in [1 + \frac{4}{N}, 3+\frac{4}{N})$. If there exists a $\overline{c}>0$ such that $\overline{m}(\overline{c})=0$ is achieved, then
\begin{eqnarray}\label{4.3}
\overline{m}(c)<0,\ \ \mbox{for all }\ c>\overline{c}.
\end{eqnarray}
\end{lem}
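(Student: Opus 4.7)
The plan is to construct, for each $c>\overline{c}$, an explicit element of $\sigma(c)$ with strictly negative energy by applying a pure spatial dilation to the assumed minimizer $u_{\overline{c}}$. In contrast with the amplitude-type rescaling used in Lemma \ref{lm3.2} for the Schr\"odinger--Poisson problem, here the appropriate choice is
\[
u_t(x):=u_{\overline{c}}(x/t),\qquad t>1,
\]
because this scaling multiplies both the kinetic term $\int|\nabla u|^2\,dx$ and the quasilinear term $\int|u|^2|\nabla u|^2\,dx$ by the same factor $t^{N-2}$, whereas $\|u\|_{L^2}^2$ and $\|u\|_{L^{p+1}}^{p+1}$ both pick up a factor $t^N$. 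This grouping is precisely what is needed to exploit the hypothesis $\mathcal{E}(u_{\overline{c}})=0$, which otherwise would give only one equation between three a priori independent scaling exponents, as would arise from a combined amplitude-plus-dilation ansatz.

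Given $c>\overline{c}$, I would set $t:=(c/\overline{c})^{1/N}>1$, which forces $\|u_t\|_2^2=c$ and, together with the scaling identities above, ensures $u_t\in\sigma(c)$ (finiteness of the $H^1$-norm and of the quasilinear integral being preserved by dilation). Substituting into $\mathcal{E}$ yields
\[
\mathcal{E}(u_t)=t^{N-2}\Bigl(\tfrac{1}{2}\int_{\R^N}|\nabla u_{\overline{c}}|^2dx+\int_{\R^N}|u_{\overline{c}}|^2|\nabla u_{\overline{c}}|^2dx\Bigr)-\frac{t^N}{p+1}\int_{\R^N}|u_{\overline{c}}|^{p+1}dx.
\]
The assumption $\mathcal{E}(u_{\overline{c}})=0$ identifies the bracket with $\tfrac{1}{p+1}\int|u_{\overline{c}}|^{p+1}dx$; denoting their common value by $K$, the formula above collapses to
\[
\mathcal{E}(u_t)=(t^{N-2}-t^N)K=-t^{N-2}(t^2-1)\,K.
\]

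The final step is to observe that $K>0$. Since $u_{\overline{c}}\in H^1(\R^N)$ and $\overline{c}>0$, the function $u_{\overline{c}}$ is neither identically zero nor a nonzero constant, so $\int_{\R^N}|\nabla u_{\overline{c}}|^2\,dx>0$ and a fortiori $K>0$. Consequently $\mathcal{E}(u_t)<0$ for every $t>1$, which yields $\overline{m}(c)\leq\mathcal{E}(u_t)<0$ and proves \eqref{4.3}. No serious obstacle is expected in the execution: the entire argument hinges on the recognition that the correct scaling is a pure dilation, after which the proof reduces to elementary exponent bookkeeping that is valid in every dimension $N\geq 1$ and uniformly in $p$ throughout the stated range.
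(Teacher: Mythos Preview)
Your proof is correct and uses essentially the same approach as the paper: a pure spatial dilation of the minimizer $u_{\overline{c}}$. The paper parametrizes the scaling as $(\bar{u})_t(x)=\bar{u}(t^{-1/N}x)$ so that the $L^2$-mass becomes $t\overline{c}$, and then observes $\mathcal{E}((\bar{u})_t)=t\bigl[t^{-2/N}K-\tfrac{1}{p+1}\int|\bar{u}|^{p+1}\bigr]<t\,\mathcal{E}(\bar{u})=0$; your version is just a relabeling of the same computation, with the cosmetic difference that you substitute the identity $K=\tfrac{1}{p+1}\int|u_{\overline{c}}|^{p+1}$ before comparing exponents rather than after.
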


\begin{proof} Let $\bar{u}\in \sigma(\overline{c})$ be a minimizer of $\overline{m}(\overline{c})$. Setting $(\bar{u})_t(x)=\bar{u}(t^{-\frac{1}{N}}x)$ for $t>1$,  we have $\|(\bar{u})_t\|_2^2=t\|\bar{u}\|_2^2=t\overline{c}$, and
\begin{eqnarray}\label{4.3.1}
\overline{m}(t\overline{c}) \leq \mathcal{E}((\bar{u})_t)&=& t^{1-\frac{2}{N}}\left ( \int_{\mathbb{R}^N}\frac{1}{2}|\nabla \bar{u}|^2+|\bar{u}|^2|\nabla \bar{u}|^2 dx    \right )-\frac{t}{p+1} \int_{\mathbb{R}^N}|\bar{u}|^{p+1}dx \nonumber  \\
&=& t \left [ t^{-\frac{2}{N}}\int_{\mathbb{R}^N} \left(\frac{1}{2}|\nabla \bar{u}|^2+|\bar{u}|^2|\nabla \bar{u}|^2 \right) dx -\frac{1}{p+1} \int_{\mathbb{R}^N}|\bar{u}|^{p+1}dx \right ] \\
&<& t \mathcal{E}(\bar{u})=t \overline{m}(\overline{c}). \nonumber
\end{eqnarray}
Thus \eqref{4.3} follows immediately from \eqref{4.3.1} since $\overline{m}(\overline{c})=0$.
\end{proof}

%To prove this lemma we use the following results of \cite{CJS}.
%\begin{lem}\label{lm4.2}(Lemma 4.4 \cite{CJS})
%Assume $p \in (1, 3+\frac{4}{N})$ and let $c>0$ be such that $\overline{m}(c)<0$. Then $\overline{m}(c)$ admits a minimizer.
%\end{lem}

%\begin{lemma}\label{lm4.3}(Lemma 4.5 \cite{CJS}) We have\\
%(1) Assume that $1<p<1+\frac{4}{N}$. Then $\overline{m}(c)<0$ for any $c>0$.\\
%(2) Assume that $1+\frac{4}{N} \leq p< 3+\frac{4}{N}$. Then $\overline{m}(c)\leq 0$ for any $c>0$. This inequality also holds if $p=3+\frac{4}{N}$ and $c>0$ is small.\\
%(3) Assume that $1+\frac{4}{N} \leq p< 3+\frac{4}{N}$. Then there exists a $c>0$, sufficiently large, such that $m(c)<0$.
%\end{lemma}

Similarly with Lemma \ref{lm3.5}, we have for $c_N$ given by (\ref{4.3'}).
\begin{lem}\label{lm4.5}
Assume that $p=3+\frac{4}{N}$. Then $c_N \in (0, \infty)$.
\end{lem}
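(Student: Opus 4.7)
The plan is to follow the same two-step strategy used in Lemma~\ref{lm3.5}: the lower bound $c_N>0$ will come from a Gagliardo--Nirenberg-type estimate that, when $p+1 = 4+\frac{4}{N}$, controls $\|u\|_{p+1}^{p+1}$ by the quasilinear term $\int |u|^{2}|\nabla u|^{2}\,dx$ with a prefactor vanishing as $c \to 0$; the upper bound $c_N < \infty$ will come from an explicit scaling applied to any fixed $u_1 \in \sigma(1)$.

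For the lower bound, I would set $v = |u|^2$ for $u \in \sigma(c)$. Then $v \in L^1(\R^N)$ with $\|v\|_1 = c$, and since $\nabla v = 2u\nabla u$ in the weak sense,
$$\|\nabla v\|_{2}^{2} = 4\int_{\R^N} |u|^{2}|\nabla u|^{2}\,dx < \infty.$$
A direct scaling check identifies the Gagliardo--Nirenberg inequality
$$\|v\|_{2+2/N}^{2+2/N} \leq C \|\nabla v\|_{2}^{2}\, \|v\|_{1}^{2/N},$$
valid for every $N \geq 1$. Since $p+1 = 2(2+2/N)$ precisely when $p = 3+\tfrac{4}{N}$, substitution yields, for some $C>0$ depending only on $N$,
$$\int_{\R^N} |u|^{p+1}\,dx \leq 4C\, c^{2/N} \int_{\R^N} |u|^{2}|\nabla u|^{2}\,dx.$$
Plugging this into $\mathcal{E}(u)$ gives
$$\mathcal{E}(u) \geq \tfrac{1}{2}\|\nabla u\|_{2}^{2} + \Bigl(1 - \tfrac{4C}{p+1}c^{2/N}\Bigr)\int_{\R^N}|u|^{2}|\nabla u|^{2}\,dx,$$
so for $c$ sufficiently small the bracket is strictly positive and $\mathcal{E}(u) > 0$ for every nontrivial $u \in \sigma(c)$. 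In particular no such $u$ verifies $\mathcal{E}(u) \leq 0$, which forces $c_N > 0$.

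For the upper bound, I would fix any $u_1 \in \sigma(1)$ (for instance a smooth, compactly supported function normalized in $L^2$) and apply the scaling $u_t(x) = u_1(t^{-1/N}x)$ already used in Lemma~\ref{lm4.1}. One checks that $u_t \in \sigma(t)$ and, when $p+1 = 4+\tfrac{4}{N}$,
$$\mathcal{E}(u_t) = t^{1-\frac{2}{N}}\Bigl(\tfrac{1}{2}\|\nabla u_1\|_{2}^{2} + \int_{\R^N} |u_1|^{2}|\nabla u_1|^{2}\,dx\Bigr) - \frac{t}{p+1}\int_{\R^N} |u_1|^{p+1}\,dx.$$
Since $1 > 1 - \tfrac{2}{N}$ and the last integral is strictly positive, the negative contribution dominates as $t \to \infty$ and $\mathcal{E}(u_t) < 0$ for $t$ sufficiently large, yielding $c_N \leq t < \infty$. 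The only delicate point is the first step: isolating the right Gagliardo--Nirenberg inequality for $v = |u|^2$ and checking that its exponents match exactly when $p = 3 + \tfrac{4}{N}$; once this matching is secured, both inclusions follow by inspection.
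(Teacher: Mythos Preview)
Your proposal is correct and follows essentially the same two-step strategy as the paper: a Gagliardo--Nirenberg-type control of $\|u\|_{p+1}^{p+1}$ by $\int |u|^2|\nabla u|^2\,dx$ with prefactor $c^{2/N}$ to obtain $c_N>0$, and the scaling $u_t(x)=u_1(t^{-1/N}x)$ to obtain $c_N<\infty$. The only difference is presentational: the paper quotes the needed inequality (your $\|u\|_{4+4/N}^{4+4/N}\leq C\,c^{2/N}\int|u|^2|\nabla u|^2\,dx$) from \cite{CJS}, whereas you derive it directly via the substitution $v=|u|^2$ and the standard Gagliardo--Nirenberg inequality for $v$, which is a clean, self-contained route valid for every $N\geq 1$.
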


\begin{proof} We know from (4.5) of \cite{CJS} that when $p \in [1 + \frac{4}{N}, 3 + \frac{4}{N}]$ there exists a $C>0$, depending only on $p$ and $N$, such that
\begin{eqnarray}\label{pf4.5.1}
\|u\|_{p+1}^{p+1} \leq C \cdot \|u\|_{2}^{2(1-\theta)}\cdot \left( \int_{\R^N}|u|^2|\nabla u|^2dx  \right)^{\frac{\theta N}{N-2}}, \quad \mbox{for all } \in \mathcal{X}
\end{eqnarray}
where $$\theta= \frac{(p-1)(N-2)}{2(N+2)} \quad \mbox{and} \quad  \mathcal{X} = \{ u\in H^1(\R^N):\ \int_{\R^N}|u|^2|\nabla u|^2dx< \infty \}.$$
Letting $p=3+\frac{4}{N}$ in (\ref{pf4.5.1}), we obtain that
\begin{eqnarray}\label{pf4.5.2}
\|u\|_{4+4/N}^{4+4/N} \leq C \cdot \|u\|_{2}^{\frac{4}{N}}\cdot \left( \int_{\R^N}|u|^2|\nabla u|^2dx  \right), \quad \mbox{ for all } u \in \mathcal{X}.
\end{eqnarray}
Thus, for any $u \in \sigma(c)$, there holds
\begin{eqnarray*}
\mathcal{E}(u)&\geq& \frac{1}{2}\|\nabla u\|_2^2 + \int_{\R^N}|u|^2|\nabla u|^2dx- C \cdot c^{\frac{2}{N}}\cdot \int_{\R^N}|u|^2|\nabla u|^2dx \\
&\geq& \left( 1- C \cdot c^{\frac{2}{N}} \right) \cdot \int_{\R^N}|u|^2|\nabla u|^2dx
\end{eqnarray*}
and  $\mathcal{E}(u)>0$ for all $ u\in \sigma(c)$ if $c>0$ is sufficiently small. This proves that $c_N>0$.

Now take $u_1\in \sigma(1)$ arbitrary and consider the scaling
\begin{eqnarray}\label{pf4.5.3}
u_t(x)=u_1(t^{-\frac{1}{N}x}), \quad \mbox{ for all } t>0.
\end{eqnarray}
We have $u_t \in \sigma(t)$ and
\begin{eqnarray}\label{pf4.5.4}
\mathcal{E}(u_t)&=& t^{1-\frac{2}{N}}\left( \frac{1}{2}\|\nabla u_1\|_2^2+\int_{\R^N}|u_1|^2|\nabla u_1|^2dx  \right)-t\cdot \frac{N}{4(N+1)}\|u_1\|_{4+4/N}^{4+4/N} \nonumber  \\
&=& t \left[ t^{-\frac{2}{N}}\left( \frac{1}{2}\|\nabla u_1\|_2^2+\int_{\R^N}|u_1|^2|\nabla u_1|^2dx  \right)-\frac{N}{4(N+1)}\|u_1\|_{4+4/N}^{4+4/N} \right].
\end{eqnarray}
This shows that $\mathcal{E}(u_t)<0$ for $t>0$ large and proves that $c_N< \infty$.
\end{proof}

\begin{proof}[Proof of Theorem \ref{th4.3}]
In Theorem 1.12 of \cite{CJS}, Point (i) was already proved except for the statement that $\overline{m}(c(p,N)) =0$. But it is a direct consequence of Point (ii) that we shall now prove. Let $c>0$ be arbitrary but fixed and let $\{c_n\}$ be a sequence  such that $c_n \to c$. We need to show that $\overline{m}(c_n)\to \overline{m}(c)$. By the definition of $\overline{m}(c_n)$, for each $n \in \N^+$, there exists a $u_n\in \sigma(c_n)$ such that
\begin{eqnarray}\label{4.9}
\mathcal{E}(u_n)\leq \overline{m}(c_n)+\frac{1}{n}.
\end{eqnarray}
It is shown in \cite{CJS} that  $\overline{m}(c)\leq 0$ for any $c>0$. Thus in particular
\begin{eqnarray}\label{4.11}
\mathcal{E}(u_n)\leq \frac{1}{n}.
\end{eqnarray}
Now we claim that the sequences $\{\|\nabla u_n\|_{2}^2\}$, $\{\int_{\R^N}|u_n|^2|\nabla u_n|^2dx\},$ $ \{\|u_n\|_{p+1}^{p+1}\}$ are bounded. Indeed using (\ref{4.11}) and (\ref{pf4.5.1}), we have
\begin{equation}\label{4.13}
\frac{1}{n}\geq \mathcal{E}(u_n)\geq \int_{\R^N}|u_n|^2|\nabla u_n|^2dx- \frac{C}{p+1}c_n^{1-\theta}\left( \int_{\R^N}|u_n|^2|\nabla u_n|^2dx  \right)^{\frac{\theta N}{N-2}}.
\end{equation}
Since $\frac{\theta N}{N-2}<1$ as $p \in [1 + \frac{4}{N}, 3+\frac{4}{N})$, we conclude from (\ref{4.13}) that $\{ \int_{\R^N}|u_n|^2|\nabla u_n|^2dx \}$ is bounded and then from (\ref{pf4.5.1}) that $\{ \|u_n\|_{p+1}^{p+1}\}$ is also bounded. At this point the fact that $\{ \|\nabla u_n\|_{2}^2\}$ is bounded follows from the boundedness of $\mathcal{E}(u_n)$. Now we see that
\begin{eqnarray*}\label{4.14}
\overline{m}(c)&\leq& \mathcal{E}\left( \sqrt{\frac{c}{c_n}}u_n \right ) \nonumber \\
&=&\frac{1}{2}\left( \frac{c}{c_n}\right )\|\nabla u_n\|_{2}^2+ \left( \frac{c}{c_n}\right )^2 \int_{\R^N}|u_n|^2|\nabla u_n|^2dx- \frac{1}{p+1}\left( \frac{c}{c_n}\right )^{\frac{p+1}{2}}\|u_n\|_{p+1}^{p+1}  \nonumber \\
&=&\mathcal{E}(u_n) + o(1) \leq \overline{m}(c_n)+ o(1). \nonumber
\end{eqnarray*}
On the other hand, for a minimizing sequence $\{v_m\}$ of $\overline{m}(c)$, we have
\begin{eqnarray*}
\overline{m}(c_n)\leq \mathcal{E}\left( \sqrt{\frac{c_n}{c}}v_m \right )= \mathcal{E}(v_m) + o(1) = \overline{m}(c)+ o(1).
\end{eqnarray*}
From these two estimates we deduce that $\lim_{n\to \infty}\overline{m}(c_n)=\overline{m}(c)$.  \\

We now prove Point (iii). Note that the statement in Theorem 1.12 of \cite{CJS} concerning $p = 3 + \frac{4}{N}$ was incorrect. We already know, from Lemma \ref{lm4.5}, that $c_N\in (0,\infty)$. Using the definition of $c_N$, it follows directly that $\overline{m}(c)=0$ for any $c\in (0, c_N)$, since one always has $\overline{m}(c)\leq 0$ for any $c\in (0, \infty)$. Now if $c>c_N$, we proceed as in the proof of Theorem \ref{lm1.1} (v), namely we observe that there exists a $v\in \sigma(c)$ such that $\mathcal{E}(v)\leq 0$. Indeed if we assume that
$\mathcal{E}(u)>0$ for all  $ u\in \sigma(c)$ we reach a contradiction as follows. For an arbitrary $\hat{c}\in [c_N, c)$ taking any $u\in \sigma(\hat{c})$ we scale it as in (\ref{pf4.5.3}) where $t=c/ \hat{c}$. Then $u_t\in \sigma(c)$ and it follows from  (\ref{pf4.5.4})  that
$\mathcal{E}(u_t)\leq t\mathcal{E}(u).$
This implies that $\mathcal{E}(u)>0$ for all $u\in \sigma(\hat{c})$ and since  $\hat{c}\in [c_N, c)$ is arbitrary this contradicts the definition of $c_N > 0$.

Hence, for any $c \in (c_N, \infty)$, there exists a $u_0\in \sigma(c)$ such that $\mathcal{E}(u_0)\leq 0$ and we consider the scaling
\begin{eqnarray}\label{4.15}
u^{\delta}(x)=\delta^{\frac{N}{2}}u_0(\delta x),\quad \mbox{ for all }  \delta>0.
\end{eqnarray}
Then $u^{\delta} \in \sigma(c)$, for all $\delta >0$ and
\begin{eqnarray}\label{4.16}
\mathcal{E}(u^{\delta})&=& \frac{\delta^2}{2}\|\nabla u_0\|_2^2+\delta^{N+2}\int_{\R^N}|u_0|^2|\nabla u_0|^2dx - \frac{N}{4(N+1)}\delta ^{N+2}\|u_0\|_{4+4/N}^{4+4/N} \nonumber\\
&=& \frac{\delta^2}{2}\|\nabla u_0\|_2^2 - \delta^{N+2} \left ( \frac{N}{4(N+1)}\|u_0\|_{4+4/N}^{4+4/N}- \int_{\R^N}|u_0|^2|\nabla u_0|^2dx \right ).
\end{eqnarray}
Since $\mathcal{E}(u_0)\leq 0$, necessarily
$$\frac{N}{4(N+1)}\|u_0\|_{4+4/N}^{4+4/N}- \int_{\R^N}|u_0|^2|\nabla u_0|^2dx>0$$
and thus we see from (\ref{4.16}) that $\lim_{\delta \to \infty}\mathcal{E}(u^{\delta})=-\infty$. It proves that $\overline{m}(c)=-\infty$ for any $c\in (c_N, +\infty)$.
\end{proof}

Before giving the proof of Theorem \ref{th4.33} we treat the limit case $c = c(p,N)$.

\begin{lem}\label{lm5.3}
Assume that $p \in (1+\frac{4}{N}, 3+\frac{4}{N})$. Then $\overline{m}(c(p,N))$ admits a minimizer.
\end{lem}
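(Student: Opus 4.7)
The plan is to adapt the strategy of Lemma \ref{lm5.1}, replacing the estimate from Lemma \ref{lm2.1} by the Gagliardo-Nirenberg inequality (\ref{pf4.5.1}) and using Lemma \ref{lm4.1} in place of Lemma \ref{lm3.2} to identify the $L^{2}$-norm of the weak limit.

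First I would set $k_n := c(p,N)+1/n$, so that $k_n \downarrow c(p,N)$. By Theorem \ref{th4.3} (i)(c), $\overline{m}(k_n)<0$, so the existence results already available in \cite{CJS} yield a minimizer $u_n \in \sigma(k_n)$ for each $n$. By the continuity statement Theorem \ref{th4.3} (ii), $\mathcal{E}(u_n)=\overline{m}(k_n) \to \overline{m}(c(p,N))=0$. Boundedness of $\|\nabla u_n\|_{2}$, $\int_{\R^N}|u_n|^{2}|\nabla u_n|^{2}dx$ and $\|u_n\|_{p+1}$ would follow exactly as in the proof of Theorem \ref{th4.3} (ii), by inserting $\mathcal{E}(u_n)\le 0$ into (\ref{pf4.5.1}) and using that $\theta N/(N-2)<1$ in the range considered.

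Next I would rule out the vanishing of $\|u_n\|_{p+1}^{p+1}$. Setting $\beta := N(p-1)/2 \in (2,N+2)$, the classical Gagliardo-Nirenberg inequality $\|u\|_{p+1}^{p+1} \le C \|\nabla u\|_{2}^{\beta}\|u\|_{2}^{p+1-\beta}$ together with $\|u_n\|_{2}^{2}=k_n$ bounded gives
$$\mathcal{E}(u_n) \ge \tfrac{1}{2}\|\nabla u_n\|_{2}^{2} - C'\|\nabla u_n\|_{2}^{\beta}.$$
If $\|u_n\|_{p+1}^{p+1}\to 0$, then from $\mathcal{E}(u_n)\to 0^{-}$ we would get $\|\nabla u_n\|_{2}\to 0$; but since $\beta>2$ the right-hand side above becomes strictly positive for $n$ large, contradicting $\mathcal{E}(u_n)<0$. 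Lions' lemma (as in \cite{L}) then provides $\{x_n\}\subset \R^N$ such that $v_n(\cdot):= u_n(\cdot+x_n) \rightharpoonup v_0 \ne 0$ weakly in $H^{1}(\R^N)$, with $v_n\to v_0$ a.e.

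The final step would be to invoke a Brezis-Lieb type decomposition for the quasilinear energy, as available in \cite{CJS}, giving
$$\|v_n\|_{2}^{2} = \|v_0\|_{2}^{2} + \|v_n-v_0\|_{2}^{2} + o(1), \qquad \mathcal{E}(v_n) = \mathcal{E}(v_0) + \mathcal{E}(v_n-v_0) + o(1).$$
Writing $c_0 := \|v_0\|_{2}^{2} \le c(p,N)$ and using the continuity of $\overline{m}$ together with the convention $\overline{m}(0)=0$, one obtains $\liminf_n \mathcal{E}(v_n-v_0) \ge \overline{m}(c(p,N)-c_0) = 0$, hence $\mathcal{E}(v_0)\le 0$. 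On the other hand $c_0 \le c(p,N)$ forces $\overline{m}(c_0)=0$, so in fact $\mathcal{E}(v_0)=0$ and $v_0$ minimizes $\overline{m}(c_0)$. Lemma \ref{lm4.1} then rules out $c_0<c(p,N)$, whence $c_0=c(p,N)$ and $v_0$ is the sought minimizer. The main obstacle in this plan will be justifying the Brezis-Lieb type splitting of the quasilinear term $\int|v|^{2}|\nabla v|^{2}$, since only weak $H^{1}$-convergence is available \emph{a priori}; one must therefore appeal to the compactness and decomposition results specifically tailored to this functional in \cite{CJS}.
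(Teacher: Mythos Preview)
Your overall strategy mirrors that of Lemma~\ref{lm5.1}, and the paper's proof of Lemma~\ref{lm5.3} follows the same outline. However, there are two concrete gaps in your plan, and the paper sidesteps both by a different compactness mechanism.

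\textbf{First gap (not flagged by you).} Your non-vanishing argument relies on the classical Gagliardo--Nirenberg inequality $\|u\|_{p+1}^{p+1}\le C\|\nabla u\|_2^{\beta}\|u\|_2^{p+1-\beta}$ with $\beta=N(p-1)/2$. This is only valid when $p+1\le 2^*=2N/(N-2)$, i.e.\ $p\le (N+2)/(N-2)$. For $N\ge 4$ the range $(1+4/N,\,3+4/N)$ extends beyond this threshold, so your inequality is simply unavailable for those $p$. The paper splits the argument: for $p\le (N+2)/(N-2)$ it uses exactly your estimate, while for $p\in((N+2)/(N-2),\,3+4/N)$ it invokes a different interpolation (H\"older and Sobolev applied to $u^2\in H^1$, see (\ref{5.12.2})) that exploits the quasilinear term to control $\|u\|_{p+1}^{p+1}$ by a power of $\|\nabla u\|_2$ strictly larger than $2$.

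\textbf{Second gap (flagged by you, and real).} A Br\'ezis--Lieb splitting for $\int |v|^2|\nabla v|^2$ under mere weak $H^1$ convergence is not available in \cite{CJS}; that term is only known to be weakly lower semicontinuous, not to split additively. The paper avoids this entirely: it takes the minimizers $u_n$ to be \emph{Schwarz symmetric} (as provided by Lemma~4.3 of \cite{CJS}), so that $u_n\to u_0$ strongly in $L^{p+1}(\R^N)$ by the compact radial embedding (using also $u_n^2\in H^1$ when $p+1>2^*$). Combined with weak lower semicontinuity of $T(u)=\tfrac12\|\nabla u\|_2^2+\int|u|^2|\nabla u|^2$, this yields $\mathcal{E}(u_0)\le 0$ directly---no Lions lemma, no translations, no splitting. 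The identification $\|u_0\|_2^2=c(p,N)$ then proceeds via Lemma~\ref{lm4.1} exactly as you propose. In short, the Schwarz-symmetric route replaces the delicate concentration-compactness/Br\'ezis--Lieb machinery by a one-line compactness argument, and you should switch to it.
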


\begin{proof} Let $c_n:=c(p,N)+\frac{1}{n},$ for all  $n\in \N^+$. Since $\overline{m}(c_n) <0$ we know by Lemma 4.3 of \cite{CJS} that $\overline{m}(c_n)$ admits, for all $n \in \N^+$ a minimizer that is Schwartz symmetric. We claim that $\{u_n\}$ is bounded in  $\mathcal{X}$, namely that $\{u_n\}$ is bounded in $H^1(\R^N)$ and $\{\int_{\R^N}|u_n|^2|\nabla u_n|^2dx\}$ is bounded. Indeed using  \eqref{pf4.5.1} we have since $\mathcal{E}(u_n) \leq 0$, for all $n \in \N ^+$,
\begin{eqnarray}\label{5.10}
\frac{1}{2}\|\nabla u_n \|_2^2 +  \int_{\R^N}|u_n|^2|\nabla u_n|^2dx & \leq & \frac{1}{p+1}\int_{\R^N}|u_n|^{p+1}dx  \nonumber\\
&\leq & \frac{C}{p+1}c_n^{1-\theta}\cdot \left ( \int_{\R^N}|u_n|^2|\nabla u_n|^2dx \right )^{\frac{\theta N}{N-2}} .
\end{eqnarray}
Since $p \in [1+\frac{4}{N}, 3+\frac{4}{N})$ we have $\frac{\theta N}{N-2}<1$ and thus \eqref{5.10} implies that both
$\{\int_{\R^N}|u_n|^2|\nabla u_n|^2dx\}$ and $ \{||\nabla u_n||_2^2\}$
are bounded.  \\

Passing to a subsequence we can assume that $u_n\rightharpoonup u_0$ in $\mathcal{X}$. Now from Lemma 4.3 of \cite{CJS} we have that
$$T(u_0)\leq \liminf_{n\to \infty}T(u_n) \quad \mbox{where} \quad T(u):=\frac{1}{2}\left \| \nabla u \right \|_2^2+\int_{\R^N}|u|^2|\nabla u|^2dx.$$
Also the fact that $\{u_n\}$ is a sequence of Schwartz symmetric functions readily implies that $u_n \to u_0$ in $L^{p+1}(\R^N)$. Thus, since by Theorem \ref{th4.3} (ii),  $\lim_{n\to \infty}\mathcal{E}(u_n)=
\lim_{n \to \infty}\overline{m}(c_n)=0$ we obtain that $\mathcal{E}(u_0) \leq 0.$ Also since  $||u_0||_2^2 \leq c(p,N)$  necessarily $\mathcal{E}(u_0) = 0.$ \smallskip

In order to show that $||u_0||_2^2 = c(p,N)$ and thus that $u_0$ is a minimizer of $c(p,N)$ we first show that $u_0 \neq 0$. By contradiction let us assume that $u_0 =0$. Then using the fact that $u_n \to 0$ in $L^p(\R^N)$ we get from $\mathcal{E}(u_n)\to 0$ that
\begin{eqnarray}\label{5.12}
\left \| \nabla u_n \right \|_2^2\to 0 \ \ \mbox{  and  }\ \int_{\R^N}|u_n|^2|\nabla u_n|^2dx \to 0,\ \mbox{ as } n\to \infty.
\end{eqnarray}
As in the proof of Lemma \ref{lm5.1} we shall prove that $\mathcal{E}(u_n)\geq 0$ for $n\in \mathbb{N}^+$  sufficiently large and this will contradict the fact that $\mathcal{E}(u_n) = \overline{m}(c_n) < 0$ for $n\in \mathbb{N}^+$. For  $p\in (1+\frac{4}{N}, \frac{N+2}{N-2}]$ if $N\geq 3$ and $p\in (1+\frac{4}{N},+\infty)$ if $N=1,2$,  by Gagliardo-Nirenberg's inequality, we have
\begin{equation}\label{5.12.1}
\int_{\R^N}|u_n|^{p+1}dx \leq C  \|\nabla u_n\|_2^{\frac{N(p-1)}{2}}\cdot c_n^{\frac{(N+2)-(N-2)p}{4}}
\leq  C\|\nabla u_n\|_2^{\frac{N(p-1)}{2}}.
\end{equation}
%\begin{eqnarray*}
%\int_{\R^N}|u_n|^{p+1}dx&\leq& C  \|\nabla u_n\|_2^{\frac{N(p-1)}{2}}\cdot c_n^{\frac{(N+2)-(N-2)p}{4}}\\
%&\leq& C\|\nabla u_n\|_2^{\frac{N(p-1)}{2}}.
%\end{eqnarray*}
Thus
\begin{eqnarray*}
\mathcal{E}(u_n)&\geq &\frac{1}{2}\|\nabla u_n\|_2^2- C \|\nabla u_n\|_2^{\frac{N(p-1)}{2}}\\
&=& \|\nabla u_n\|_2^2\left ( \frac{1}{2}-  C \|\nabla u_n\|_2^{\frac{Np-(N+4)}{2}}  \right ).
\end{eqnarray*}
This, together with \eqref{5.12}, proves that $\mathcal{E}(u_n)\geq 0$ as $n\in \mathbb{N}^+$ is sufficiently large. For  $p\in (\frac{N+2}{N-2}, 3+\frac{4}{N}), N\geq 3$, we know from the proof of Theorem 1.12 of \cite{CJS} that  $\{u_n\}$  it is bounded in $L^q(\R^N)$ for all $q\geq \frac{4N}{N-2}$. Thus by H\"older and Sobolev's inequalities we can write
\begin{eqnarray}\label{5.12.2}
\int_{\R^N}|u_n|^{p+1}dx&\leq& C(p,N) \|\nabla u_n\|_2^{\alpha}\cdot \|u_n\|_{(p-1)N}^{\beta},
\end{eqnarray}
where
$$\alpha= \frac{2N(p-1)-2(p+1)}{(p-1)(N-2)-2} \quad \mbox{and} \quad \beta=(p-1)\frac{(N-2)(p+1)-2N}{(p-1)(N-2)-2}.$$
For more details see, in particular, (4.16) in \cite{CJS}. Now since $\|u_n\|_{(p-1)N}^{\beta}$ is bounded we have
\begin{eqnarray*}
\mathcal{E}(u_n)&\geq &\frac{1}{2}\|\nabla u_n\|_2^2-C(p,N)\|\nabla u_n\|_2^{\alpha}\\
&=& \|\nabla u_n\|_2^2\left ( \frac{1}{2}-  C(p,N)\|\nabla u_n\|_2^{\alpha-2}  \right ).
\end{eqnarray*}
Since $\alpha-2>0$ as $p>1$, we then deduce using  \eqref{5.12} that $\mathcal{E}(u_n)\geq 0$ for all $n\in \mathbb{N}^+$  sufficiently large. This proves that $u_0 \neq 0$. Finally if we assume that $ \|u_0\|_2^2< c(p,N)$ we directly get a contradiction from Lemma \ref{lm4.1} since $\overline{m}(c) =0$ for all $c \in (0, c(p,N)]$. Thus $\|u_0\|_2^2=c(p,N)$ and $u_0$ is a minimizer of $\overline{m}(c(p,N))$.
\end{proof}

\begin{proof}[Proof of Theorem \ref{th4.33}]
In Theorem 1.12 of \cite{CJS} it is shown that $\overline{m}(c)$ admits a minimizer if $c \in ( c(p,N), \infty)$. By Lemma \ref{lm5.3} this is also true for $c =c(p,N)$. To complete the proof of Point (i) we need to show that for $c \in (0, c(p,N))$, $\overline{m}(c)$  does not admit a minimizer. But since $\overline{m}(c)=0$ for $c \in (0, c(p,N)]$ it results directly from Lemma
\ref{lm4.1}. To prove Point (ii) we argue by contradiction assuming that there exists a $c>0$ such that $\overline{m}(c)$ admits a minimizer $u_{c}$. Then, by standard arguments, $u_c$ satisfies weakly
\begin{equation}\label{4.4}
- \Delta u_c - \lambda_c u_c - u_c \Delta |u_c|^2 = |u_c|^{p-1}u_c,
\end{equation}
where $\lambda_c \in \R$ is the associated Lagrange multiplier. Multiplying (\ref{4.4})
by $u_{c}$ and integrating we derive that
\begin{eqnarray}\label{4.8.2}
\int_{\R^N}|\nabla u_{c}|^2dx + 4 \int_{\R^N}|u_{c}|^2|\nabla u_{c}|^2dx -
\int_{\R^N}| u_{c}|^{p+1}dx=\lambda_{c}\|u_{c}\|_{2}^2.
\end{eqnarray}
Also, from  Lemma 3.1 of \cite{CJS} we know that $u_c$ satisfies the  Pohozaev identity
\begin{equation}\label{rajout1}
\frac{N-2}{N}\left(\frac{1}{2}\int_{\R^N}|\nabla u_{c}|^2dx + \int_{\R^N}|u_{c}|^2|\nabla u_{c}|^2dx \right)=\frac{\lambda_{c}}{2}\|u_{c}\|_{2}^2+\frac{1}{p+1}\|u_{c}\|_{p+1}^{p+1}.
\end{equation}
It follows from (\ref{4.8.2}) and (\ref{rajout1}) that
\begin{eqnarray}\label{4.8.3}
\|\nabla u_{c}\|_{2}^2 + (N+2) \int_{\R^N}|u_{c}|^2|\nabla u_{c}|^2dx - \frac{N(p-1)}{2(p+1)}\|u_{c}\|_{p+1}^{p+1}=0,
\end{eqnarray}
by which we can rewrite $\mathcal{E}(u_{c})$ as
\begin{equation}\label{4.8.4}
\mathcal{E}(u_{c})= \frac{Np-(N+4)}{2N(p-1)}\|\nabla u_{c}\|_{2}^2 + \frac{Np-(3N+4)}{N(p-1)} \int_{\R^N}|u_{c}|^2|\nabla u_{c}|^2dx.
\end{equation}
When $p=3+\frac{4}{N}$, (\ref{4.8.4}) becomes
\begin{eqnarray}\label{4.8.5}
\mathcal{E}(u_{c})= \frac{N}{2N+4}\|\nabla u_{c}\|_{2}^2.
\end{eqnarray}
This is clearly a contradiction since by assumption $\mathcal{E}(u_{c})=\overline{m}(c)\leq 0$ and Point (ii) is established.
\end{proof}

\begin{proof}[Proof of Theorem \ref{th4.333}]
From the proof of Theorem \ref{th4.33}, we know that any critical point $u_c$ of $\mathcal{E}(u)$ restricted to $\sigma(c)$ must satisfy \eqref{4.8.3}. Denoting
\begin{eqnarray*}
\overline{Q}(u)=\|\nabla u\|_{2}^2 + (N+2) \int_{\R^N}|u|^2|\nabla u|^2dx - \frac{N(p-1)}{2(p+1)}\|u\|_{p+1}^{p+1},
\end{eqnarray*}
we thus have $\overline{Q}(u_c)=0$. Now we assume by contradiction that there exist sequence $\{c_n\}\subset \R^+$ with $c_n\to 0$, and $\{u_n\}\subset \sigma(c_n)$ such that $u_n$ is a critical point of $\mathcal{E}(u)$ on $\sigma(c_n)$. Then for each $n\in \N^+$, $\overline{Q}(u_n)=0$ and using \eqref{pf4.5.1} we obtain
\begin{equation}\label{4.9.1}
\|\nabla u_n\|_2^2+ (N+2) \int_{\R^N}|u_n|^2|\nabla u_n|^2dx \leq C \cdot c_n^{1-\theta} \cdot \left ( \int_{\R^N}|u_n|^2|\nabla u_n|^2dx \right )^{\frac{\theta N}{N-2}},
\end{equation}
where $\theta=\frac{(p-1)(N-2)}{2(N+2)}$.  When $p=3+\frac{4}{N}$ we have $\frac{\theta N}{N-2}=1$ , $1-\theta=\frac{4}{N}$ and thus we get immediately a contradiction from \eqref{4.9.1}. Now when $p\in [1+\frac{4}{N}, 3+\frac{4}{N})$, $\frac{\theta N}{N-2}<1$ and we derive from \eqref{4.9.1} that
\begin{equation}\label{4.9.2}
\int_{\R^N}|u_n|^2|\nabla u_n|^2dx\to 0\ \mbox{ and } \|\nabla u_n\|_2^2\to 0\    \mbox{ as } n\to \infty.
\end{equation}
Also when $p\in [1+\frac{4}{N}, \frac{N+2}{N-2}]$ if $N\geq 3$ and $p\in [1+\frac{4}{N},+\infty)$ if $N=1,2$,  we obtain from \eqref{5.12.1} that
\begin{eqnarray} \label{4.9.3}
\overline{Q}(u_n)&\geq &\|\nabla u_n\|_2^2- C \|\nabla u_n\|_2^{\frac{N(p-1)}{2}}\cdot c_n^{\frac{(N+2)-(N-2)p}{4}} \nonumber \\
&=& \|\nabla u_n\|_2^2\left ( 1-  C \|\nabla u_n\|_2^{\frac{Np-(N+4)}{2}}\cdot c_n^{\frac{(N+2)-(N-2)p}{4}} \right ).
\end{eqnarray}
Taking \eqref{4.9.2} into account \eqref{4.9.3} implies that $\overline{Q}(u_n)>0$ for $n\in \N^+$ large enough and provides a contradiction.

When $p\in (\frac{N+2}{N-2}, 3+\frac{4}{N}), N\geq 3$, using \eqref{5.12.2} and the fact that $\{\|u_n\|_{(p-1)N}^{\beta}\} $ is bounded, we have
\begin{eqnarray*}
\overline{Q}(u_n)&\geq &\|\nabla u_n\|_2^2-C(p,N)\|\nabla u_n\|_2^{\alpha}.
\end{eqnarray*}
Since $\alpha-2$ as $p>1$, using \eqref{4.9.2} we conclude that $\overline{Q}(u_n)>0$ for $n\in \N^+$ sufficiently large. Here also we have obtained a contradiction and this ends the proof.
\end{proof}

%-----------------------------------------------------------------------------------------------------------------------

\end{document}